\documentclass[article,12pt]{elsarticle}

	
	\usepackage[margin=1.5in]{geometry}  
	\usepackage{graphicx}              
	\usepackage{amsmath} 
	\usepackage{orcidlink}              
	\usepackage{amsfonts}
	\usepackage{amsthm} 
	\usepackage{colortbl}               
	\usepackage{framed}
	\usepackage{comment}
	\usepackage{url}
	
	\newtheorem{thm}{Theorem}[section]
	\newtheorem{lemma}[thm]{Lemma}
	\newtheorem{prop}[thm]{Proposition}
	\newtheorem{cor}[thm]{Corollary}
	
	\newtheorem{dfn}[thm]{Definition}
	\newtheorem{rmk}[thm]{Remark}


	\makeatletter
	\def\ps@pprintTitle{%
		\let\@oddhead\@empty
		\let\@evenhead\@empty
		\def\@oddfoot{\reset@font\hfil}
		\def\@evenfoot{\reset@font\hfil}
	}
	\makeatother
	\begin{document}
		
		\begin{frontmatter}
			
			\title{\bf On the Order Estimates for Specific Functions of $\zeta(s)$ and its Contribution towards the Analytic Proof of The Prime Number Theorem}
			
			\author[affil]{\textsc{Subham De} \orcidlink{0009-0001-3265-4354}}
			
			\address[affil]{Department of Mathematics, Indian Institute of Technology Delhi, India \footnote{email: subham581994@gmail.com}\footnote{Website: \url{www.sites.google.com/view/subhamde}}}
			
			\begin{abstract}
				\noindent This article provides a proof of the famous \textit{Prime Number Theorem} by establishing an analogous statement of the same in terms of the second \textit{Chebyshev Function} $\psi(x)$. We shall be extensively using complex analytic techniques in addition to certain meromorphic properties of the \textit{Reimann Zeta Function} $\zeta(s)$ and its \textit{Analytic Continuation Property} a priori using Riemann's Functional Equation in order to establish our desired result.
			\end{abstract}
			
			\begin{keyword}
				Prime Number Theorem, Riemann Zeta Function, Holomorphic, Contour Integral, Critical Line
				
				\MSC[2020] Primary  11-02, 11A25, 11M26, 11N37 \sep Secondary 11-03, 11M06, 30D30, 32E20
			\end{keyword}

		\end{frontmatter}
		
	\section{Introduction}	
	In Number Theory, the \textbf{Prime Number Theorem (PNT)} describes the asymptotic distribution of the prime numbers among the positive integers. It formalizes the intuitive idea that primes become less common as they become larger by precisely quantifying the rate at which this occurs. The theorem was proved independently by \textit{Jacques Hadamard} and \textit{Charles Jean de la Vallée-Poussin} in $1896$ using ideas introduced by \textit{Bernhard Riemann} (in particular, the \textit{Riemann zeta function}).\par
	The first such distribution found is $\pi(x)\sim \frac{x}{\log x}$, where $\pi(x)$ is the prime-counting function and $\log x$ is the natural logarithm of $x$. This means that for large enough $x$, the probability that a random integer $n$ not greater than $x$ is prime is very close to $\frac{1}{\log x}$. Consequently, a random integer with at most $2k$ digits (for large enough $k$) is about half as likely to be prime as a random integer with at most $k$ digits. For example, among the positive integers of at most $1000$ digits, about one in $2300$ is prime $(\log 101000\approx 2302.6)$, whereas among positive integers of at most $2000$ digits, about one in $4600$ is prime $(\log 102000 \approx 4605.2)$. In other words, the average gap between consecutive prime numbers among the first $n$ integers is roughly $\log n $.
	\section{Statement of the Prime Number Theorem}
	\subsection{Notion of Arithmetic Functions}
	We first introduce some special arithmetic functions and notations required to appreciate the analytic aspects of the \textit{Prime Number Theorem} \cite{6}.
	\begin{dfn}\label{def1}
		For each $\textit{x}\geq0$ ,we define,
		\begin{center}
			$ \pi(x):=$The number of primes $\leq \textit{x}$.
		\end{center}
	\end{dfn}
	\begin{dfn}\label{def2}
		For each $\textit{x}\geq0$ , we define,
		\begin{center}
			$\psi(x):=\sum\limits_{n\le x}\Lambda(n)$ ,
		\end{center}
		Where ,  
		\begin{eqnarray}\label{1}
			\Lambda(n) 
			:=\left\{
			\begin{array}{cc}
				\log p\mbox{ }, &\mbox{ if }n=p^m,\mbox{ } p^m\leq x,\mbox{ }m\in \mathbb{N}\\
				0\mbox{ }, &\mbox{ otherwise }.
			\end{array}
			\right.
		\end{eqnarray} 
		$ \Lambda(n)$ is said to be the "\textit{Mangoldt Function}"  .\\
		Therefore,
		\begin{eqnarray}\label{2}
			\psi(x) = \sum\limits_{n\leq x}\Lambda(n) = \sum\limits_{m=1}^{\infty}\sum\limits_{p ,  p^m\leq x} \Lambda(p^m) = \sum\limits_{m=1}^{\infty}\sum\limits_{p\leq x^{\frac{1}{m}}} \log p
		\end{eqnarray}
	\end{dfn} 
	\begin{dfn}\label{def3}
		(\textbf{Chebyshev Theta Function}) For each $x\geq0$, we define,
		\begin{center}
			$ \vartheta(x) := \sum\limits_{p\leq x} \log p $.
		\end{center}  
	\end{dfn} 
	\begin{rmk}\label{rmk1}
		A priori from definitions \eqref{def2} and \eqref{def3}, it can be deduced that,
		\begin{center}
			$ \psi(x) = \sum\limits_{m\leq log_{2}{x}} \vartheta(x^{\frac{1}{m}}) $
		\end{center} 
	\end{rmk}

	\begin{dfn}\label{def4}
		(\textbf{M\"{o}bius Function})  The M\"{o}bius Function $ \mu $ is defined as follows :
		\begin{center}
			$\mu(1) = 1$.
		\end{center}
		
		If $ n>1 $, such that suppose, $ n = {{p_{1}}^{a_{1}}}{{p_{2}}^{a_{2}}}{{p_{3}}^{a_{3}}}\cdots{{p_{k}}^{a_{k}}} $ . Then,
		\begin{eqnarray}\label{3}
			\mu(n) :=\left\{
			\begin{array}{cc}
				(-1)^{k}\mbox{ }, &\mbox{ if }a_{1} = a_{2} = \cdots\cdots = a_{k} = 1\\
				0\mbox{ }, &\mbox{ otherwise }.
			\end{array}
			\right.
		\end{eqnarray}
	\end{dfn} 
	\begin{dfn}\label{def5}
		(Big $O$ Notation)  Given $g(x) > 0\hspace{10pt} \forall \mbox{  }x\geq a$, the notation, $f(x) = O(g(x))$ implies that, the quotient, $\frac{f(x)}{g(x)}$ is bounded for all $x\geq a$;  i.e., $\exists$ a constant $M > 0$ such that,
		\begin{center}
			$\arrowvert f(x)\arrowvert \leq M.g(x)\hspace{10pt}\mbox{,  }\forall \mbox{  }x\geq a$ .
		\end{center}
	\end{dfn}
	\subsection{Asymptotic Bound for $\pi(x)$}
	\begin{dfn}\label{def6}
		We say $f(x)$ is \textit{asymptotic} to $g(x)$, and denote it by, $f(x) \sim g(x)$ if, \space\space  $\lim\limits_{x\to\infty}\frac{f(x)}{g(x)}= 1$.
	\end{dfn} 
	A priori having all the necessary notations and definitions in our arsenal, we can formally state the \textit{"Prime Number Theorem"}.
	\begin{thm}\label{thm1}
		(\textbf{Prime Number Theorem})  For every real number $x\geq 0$, we define the \textit{prime counting function} $\pi(x)$ as in \eqref{def1}. Then, the following estimate is valid.  
		\begin{align}
			\pi(x) \sim \frac{x}{\log x} 
		\end{align}
		Equivalently, 
		\begin{align}
			\lim\limits_{x\to\infty}\frac{\pi(x)\log x}{x} = 1 
		\end{align} . 
	\end{thm}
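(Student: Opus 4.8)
The plan is to reduce Theorem~\ref{thm1} to the equivalent asymptotic statement $\psi(x) \sim x$ for the second Chebyshev function, and then to establish the latter by complex-analytic means. To justify the reduction I would first show that the three statements $\pi(x) \sim x/\log x$, $\vartheta(x) \sim x$, and $\psi(x) \sim x$ are equivalent. The passage from $\psi$ to $\vartheta$ uses Remark~\ref{rmk1}: the difference $\psi(x) - \vartheta(x) = \sum_{2 \le m \le \log_2 x} \vartheta(x^{1/m})$ consists of $O(\log x)$ terms each of size $O(\sqrt{x}\,\log x)$, whence $\psi(x) - \vartheta(x) = O(\sqrt{x}\,(\log x)^2) = o(x)$. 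The passage between $\vartheta$ and $\pi$ is Abel (partial) summation: expressing $\vartheta(x) = \int_{2^-}^{x} \log t \, d\pi(t)$ and $\pi(x) = \int_{2^-}^{x} (\log t)^{-1}\, d\vartheta(t)$ shows that either asymptotic forces the other, so it suffices to prove $\psi(x) \sim x$.

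Next I would bring in the analytic input. For $\mathrm{Re}(s) > 1$, differentiating the Euler product yields the identity $-\zeta'(s)/\zeta(s) = \sum_{n=1}^{\infty} \Lambda(n)\, n^{-s}$, tying the Mangoldt function of Definition~\ref{def2} directly to $\zeta$. Using the functional equation and analytic continuation advertised in the abstract, $\zeta$ extends meromorphically to $\mathbb{C}$ with a single simple pole at $s=1$ of residue $1$; hence $-\zeta'(s)/\zeta(s)$ is meromorphic with a simple pole of residue $+1$ at $s=1$, together with poles located exactly at the zeros of $\zeta$.

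To extract $\psi(x) \sim x$ I would apply Perron's formula to $-\zeta'(s)/\zeta(s)$ along a vertical line $\mathrm{Re}(s) = c > 1$. For cleaner convergence I would work with the smoothed sum $\psi_1(x) = \int_0^x \psi(t)\,dt$, for which Perron gives
\[
\psi_1(x) = \frac{1}{2\pi i}\int_{c-i\infty}^{c+i\infty} \left(-\frac{\zeta'(s)}{\zeta(s)}\right)\frac{x^{s+1}}{s(s+1)}\,ds.
\]
Shifting this contour to the left of $s=1$ picks up the residue there, contributing the main term $x^2/2$; a standard differencing argument, exploiting that $\psi$ is nondecreasing, then recovers $\psi(x) \sim x$ from $\psi_1(x) \sim x^2/2$.

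The crux, and the step I expect to be hardest, is controlling the shifted integral, which rests on two facts: first, that $\zeta(s) \neq 0$ on the line $\mathrm{Re}(s) = 1$, so that no pole of $-\zeta'/\zeta$ obstructs moving the contour onto that line; and second, quantitative upper bounds for $|\zeta'/\zeta|$ together with lower bounds for $|\zeta|$ near the $1$-line that force the remaining vertical integral to be $o(x^2)$. The non-vanishing on $\mathrm{Re}(s)=1$ is the classical hard kernel; the standard route is the elementary inequality $3 + 4\cos\theta + \cos 2\theta \ge 0$ applied to $|\zeta(\sigma)^3\, \zeta(\sigma+it)^4\, \zeta(\sigma+2it)|$, which rules out a zero at $1+it$. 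Alternatively one may finish via a Wiener--Ikehara Tauberian theorem applied to $-\zeta'/\zeta$, which converts the non-vanishing on the $1$-line directly into $\psi(x) \sim x$ and bypasses the explicit contour estimates at the cost of invoking the Tauberian machinery.
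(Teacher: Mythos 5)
Your overall route coincides with the paper's (which is Apostol's proof): reduce the theorem to $\psi(x)\sim x$ (Theorems \ref{thm2}--\ref{thm3}), smooth to $\psi_1$, represent $\psi_1$ as a Perron-type integral of $-\zeta'(s)/\zeta(s)$ on $\mathrm{Re}(s)=c>1$ (Theorem \ref{thm8}), prove $\zeta(1+it)\neq 0$ via $3+4\cos\theta+\cos 2\theta\geq 0$ (Theorems \ref{thm13}--\ref{thm14}), bound $|1/\zeta|$ and $|\zeta'/\zeta|$ near the $1$-line (Theorems \ref{thm11} and \ref{thm15}), move the contour, and recover $\psi(x)\sim x$ from $\psi_1(x)\sim x^2/2$ by the monotonicity lemma (Lemma \ref{lemma2}). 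So there is no difference of strategy to report; the problem is a gap at exactly the step you call the crux.

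You propose to shift the contour ``to the left of $s=1$,'' collect the residue $x^2/2$, and then show the shifted vertical integral is $o(x^2)$ using the non-vanishing and the growth bounds. As stated, this step does not go through. If the shifted contour is a line $\mathrm{Re}(s)=1-\delta$, then crossing the pole requires knowing $\zeta$ has no zeros in the strip $1-\delta<\mathrm{Re}(s)<1$; no such strip is known unconditionally (the classical zero-free region narrows to width $0$ as $|t|\to\infty$), and $\zeta(1+it)\neq 0$ says nothing to the left of the line. If instead the final contour is the line $\mathrm{Re}(s)=1$ itself, the pole of $-\zeta'/\zeta$ at $s=1$ lies \emph{on} that contour: you cannot both collect its full residue and treat the line integral as an absolutely convergent integral, and an indentation around the pole produces principal-value and half-residue contributions that are themselves of size $x^2$. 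The paper's fix is to neutralize the pole before shifting: Theorem \ref{thm9} subtracts the explicit integral $\frac{1}{2\pi i}\int\frac{x^{s-1}}{(s-1)s(s+1)}\,ds=\frac{1}{2}\left(1-\frac{1}{x}\right)^2$ (Lemma \ref{lemma3} with $k=2$), which carries the entire main term, leaving $\frac{1}{2\pi i}\int x^{s-1}h(s)\,ds$ with $h(s)=\frac{1}{s(s+1)}\left(-\frac{\zeta'(s)}{\zeta(s)}-\frac{1}{s-1}\right)$ analytic at $s=1$ (Theorem \ref{thm16}); only then is the contour moved to $\mathrm{Re}(s)=1$ via Cauchy's theorem on rectangles (Theorem \ref{thm17}). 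Note also that your bounds yield only absolute integrability of $h(1+it)$, i.e.\ the line integral is $O(x^2)$; the improvement to $o(x^2)$ is not forced by size estimates but by oscillation, namely the Riemann--Lebesgue lemma (Theorem \ref{thm4}) applied to $\int h(1+it)e^{it\log x}\,dt$. Your Wiener--Ikehara alternative is a legitimate way to package both of these missing ingredients, but in the contour route you must make the subtraction and the Riemann--Lebesgue step explicit.
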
 
	Given the difficulty in establishing the above result in support of proving the prime number theorem, we intend to explore about the possibility of working on an equivalent version of \textit{Theorem} \eqref{thm1}, which obviously will be less tedious and easy to comprehend.
	\begin{thm}\label{thm2}
		We have,
		\begin{align}
			\vartheta(x) \sim \pi(x)\log x 
		\end{align}
	\end{thm}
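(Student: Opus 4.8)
The plan is to prove $\lim_{x\to\infty}\vartheta(x)/(\pi(x)\log x)=1$ by squeezing the ratio between a trivial upper bound and a matching lower bound, following an elementary Chebyshev-type comparison rather than the deeper analytic machinery developed later in the paper. Throughout I would take $x\geq 2$ so that $\pi(x)\geq 1$ and every quantity in sight is positive.

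First I would dispose of the easy direction. Since every prime $p\leq x$ satisfies $\log p\leq\log x$, summing over all such primes in Definition \eqref{def3} gives at once
\begin{align*}
\vartheta(x)=\sum_{p\leq x}\log p\;\leq\;\sum_{p\leq x}\log x=\pi(x)\log x,
\end{align*}
so that $\vartheta(x)/(\pi(x)\log x)\leq 1$ for all $x\geq 2$, and in particular $\limsup_{x\to\infty}\vartheta(x)/(\pi(x)\log x)\leq 1$.

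The substance lies in the matching lower bound. Fixing a parameter $\alpha$ with $0<\alpha<1$, I would discard from the sum defining $\vartheta(x)$ all primes not exceeding $x^{\alpha}$, keeping only those in the window $(x^{\alpha},x]$. Each surviving prime satisfies $\log p>\alpha\log x$, and there are exactly $\pi(x)-\pi(x^{\alpha})$ of them, whence
\begin{align*}
\vartheta(x)\;\geq\;\sum_{x^{\alpha}<p\leq x}\log p\;>\;\alpha\log x\,\bigl(\pi(x)-\pi(x^{\alpha})\bigr).
\end{align*}
Applying the trivial estimate $\pi(x^{\alpha})\leq x^{\alpha}$ and dividing by $\pi(x)\log x$ yields
\begin{align*}
\frac{\vartheta(x)}{\pi(x)\log x}\;>\;\alpha\Bigl(1-\frac{x^{\alpha}}{\pi(x)}\Bigr).
\end{align*}
I would then send $x\to\infty$ for the fixed $\alpha$ and finally let $\alpha\uparrow 1$, forcing $\liminf_{x\to\infty}\vartheta(x)/(\pi(x)\log x)\geq 1$; combined with the upper bound this pins the limit at $1$, which is the assertion $\vartheta(x)\sim\pi(x)\log x$ by Definition \eqref{def6}.

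The main obstacle is justifying that the correction term vanishes, that is, that $x^{\alpha}/\pi(x)\to 0$ as $x\to\infty$ for each fixed $\alpha<1$. This is exactly where a genuine (though elementary) input is needed: one must know that $\pi(x)$ outgrows every power $x^{\alpha}$ with $\alpha<1$, for instance through a Chebyshev-type lower bound $\pi(x)\gg x/\log x$, or even the weaker statement $\pi(x)\geq x^{\alpha'}$ for some $\alpha'\in(\alpha,1)$ and all large $x$. I would supply this by the standard binomial argument bounding $\prod_{n<p\leq 2n}p$ from below via $\binom{2n}{n}\geq 4^{n}/(2n+1)$, which is self-contained and wholly independent of Theorem \eqref{thm1}, thereby keeping the present reduction non-circular. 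An equivalent and slightly cleaner packaging of the same estimate is the Abel summation identity $\vartheta(x)=\pi(x)\log x-\int_{2}^{x}\pi(t)/t\,dt$, after which the same Chebyshev bounds show the integral is $o(\pi(x)\log x)$; either route reaches the conclusion.
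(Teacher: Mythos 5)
Your proposal follows essentially the same route as the paper's own proof: the trivial upper bound $\vartheta(x)\le\pi(x)\log x$, and a matching lower bound obtained by discarding the primes up to $x^{\alpha}$ (the paper writes $x^{1-\varepsilon}$), then letting $\alpha\uparrow 1$. The genuine difference is that you identify, and fill, a gap the paper passes over silently: after the truncation one must know that the discarded term $x^{\alpha}$ --- the paper's $O(x^{1-\varepsilon})$ --- is $o(\pi(x))$, i.e.\ that $\pi(x)$ outgrows every fixed power $x^{\alpha}$ with $\alpha<1$. The paper simply writes $(1-\varepsilon)\bigl(\pi(x)+O(x^{1-\varepsilon})\bigr)\log x$ and concludes the asymptotic without justifying that this error is negligible relative to $\pi(x)$; that is precisely where a Chebyshev-type input such as $\pi(x)\gg x/\log x$ is needed, and your binomial-coefficient argument supplies it in a self-contained way that does not presuppose Theorem \eqref{thm1}, so no circularity is introduced. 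In short, yours is the same proof carried out completely. One small correction to your sketch of the Chebyshev step: the inequality $\binom{2n}{n}\ge 4^{n}/(2n+1)$ should be combined with $\binom{2n}{n}\le (2n)^{\pi(2n)}$ (every prime power dividing $\binom{2n}{n}$ is at most $2n$) to yield $\pi(2n)\gg n/\log n$; the product $\prod_{n<p\le 2n}p$ is bounded \emph{above} by $\binom{2n}{n}$ and belongs to the companion upper bound $\vartheta(x)=O(x)$, which you do not need here.
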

	\begin{proof}
		A priori it follows from definition and the Theorem of Partial Sums of Dirichlet Product that,
		\begin{align}
			\vartheta(x) = \sum\limits_{x^{1-\varepsilon}\leq p\leq x} \log p\mbox{ }\lfloor\frac{\log x}{\log p}\rfloor \leq \sum\limits_{p\leq x} \log x \leq \pi(x)\log x 
		\end{align}

		Hence, for arbitrary $ \varepsilon>0 $ ,
		\begin{align*}
			\vartheta(x)\geq \sum\limits_{x^{1-\varepsilon}\leq p\leq x} \log p \geq \sum\limits_{x^{1-\varepsilon}\leq p\leq x} (1-\varepsilon)\log x = (1-\varepsilon)(\pi(x) +O(x^{1- \varepsilon}))\log x  
		\end{align*}
		
		Therefore, we conclude, 
		\begin{align}
			\Rightarrow \vartheta(x) \rightarrow \pi(x)\log x \mbox{, \space  as \hspace{10pt}}  x \rightarrow \infty 
		\end{align}
		\begin{align}
			\Rightarrow \vartheta(x) \sim \pi(x)\log x  .
		\end{align}
	\end{proof}
	An application of \textit{Theorem \eqref{thm2}} along with properties of the functions $\vartheta(x)$ and $\psi(x)$ we can indeed formulate an \textit{alternative statement of Prime Number Theorem} :
	\begin{thm}\label{thm3}
		\begin{align}
			\psi(x) \sim x \mbox{\hspace{20pt}  as \hspace{10pt}} x \rightarrow \infty.
		\end{align}  
	\end{thm}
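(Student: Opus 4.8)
The plan is to realize $\psi(x)$ as a contour integral of the logarithmic derivative of $\zeta(s)$ and to evaluate it by displacing the contour past the line $\Re(s)=1$. First I would exploit the Euler product $\zeta(s) = \prod_{p}(1-p^{-s})^{-1}$, valid for $\Re(s)>1$, to obtain the Dirichlet series
\begin{align}
-\frac{\zeta'(s)}{\zeta(s)} = \sum_{n=1}^{\infty}\frac{\Lambda(n)}{n^{s}}, \qquad \Re(s)>1,
\end{align}
which ties the Mangoldt function of Definition \eqref{def2} directly to $\zeta$. Perron's formula then furnishes, for any $c>1$, the representation
\begin{align}
\psi(x) = \frac{1}{2\pi i}\int_{c-i\infty}^{c+i\infty}\left(-\frac{\zeta'(s)}{\zeta(s)}\right)\frac{x^{s}}{s}\,ds,
\end{align}
which I would interpret through a truncated or smoothed version to guarantee convergence and recover $\psi(x)$ at the end.

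Next I would shift the line of integration to the left of $\Re(s)=1$ and collect the residues of the integrand. The simple pole of $\zeta(s)$ at $s=1$ forces $-\zeta'/\zeta$ to have a simple pole there of residue $1$, so the factor $x^{s}/s$ contributes the residue $x$ — precisely the conjectured main term. The pole of $x^{s}/s$ at $s=0$ together with the trivial zeros of $\zeta$ (supplied by the functional equation) contribute only lower-order terms. Everything therefore hinges on the poles of $-\zeta'/\zeta$ arising from the nontrivial zeros in the critical strip.

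The decisive step, and the principal obstacle, is to establish that $\zeta(s)\neq 0$ on the whole vertical line $\Re(s)=1$, and in fact to upgrade this to a quantitative zero-free region of the form $\sigma \geq 1 - c/\log|t|$. For the non-vanishing I would invoke the classical trigonometric inequality $3+4\cos\theta+\cos 2\theta \geq 0$, which yields $\left|\zeta(\sigma)^{3}\,\zeta(\sigma+it)^{4}\,\zeta(\sigma+2it)\right|\geq 1$ for $\sigma>1$ and excludes a zero at $s=1+it$. Promoting this to the logarithmic zero-free region requires polynomial growth bounds for $\zeta$ and $\zeta'$ in the strip, which I would again derive from the functional equation.

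Finally, with the zero-free region secured, I would estimate the displaced contour integral. Since $-\zeta'/\zeta$ grows at most logarithmically to the right of $\sigma = 1 - c/\log|t|$, the integral over the shifted contour is $o(x)$; balancing the truncation height $T$ against $x$ optimally produces an error of size $O\!\left(x\,e^{-c'\sqrt{\log x}}\right)=o(x)$. Combining the main term $x$ with this estimate gives $\psi(x)=x+o(x)$, i.e. $\psi(x)\sim x$. I expect the bookkeeping of the truncation error in Perron's formula and the careful control of $-\zeta'/\zeta$ along the shifted contour to be the most technically demanding parts of the argument.
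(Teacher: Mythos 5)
Your proposal is mathematically sound --- it is essentially the classical de la Vall\'ee Poussin/Davenport argument --- but it takes a genuinely different route from the paper. You apply a (truncated) Perron formula directly to $\psi(x)$, push the contour past $\Re(s)=1$, and therefore must pay for a quantitative zero-free region $\sigma \geq 1-c/\log|t|$ inside the critical strip together with careful truncation bookkeeping; the reward is an effective error term $O\bigl(x\,e^{-c'\sqrt{\log x}}\bigr)$. The paper (following Apostol) instead works with the smoothed sum $\psi_{1}(x)=\int_{1}^{x}\psi(t)\,dt=\sum_{n\leq x}(x-n)\Lambda(n)$, whose contour representation (Theorems \eqref{thm8} and \eqref{thm9}) carries the extra decay factor $\frac{1}{s(s+1)}$; this makes the integrand absolutely integrable on the line $\sigma=1$, so the contour is moved only \emph{to} that line, never across it. Consequently the paper needs only the non-vanishing $\zeta(1+it)\neq 0$ (Theorem \eqref{thm14}) together with the crude bounds $|1/\zeta(s)|\ll \log^{7}t$ and $|\zeta'(s)/\zeta(s)|\ll \log^{9}t$ of Theorem \eqref{thm15} --- no zero-free region inside the strip at all --- and the Riemann--Lebesgue lemma (Theorem \eqref{thm4}) then yields $\psi_{1}(x)\sim x^{2}/2$ with no rate, after which the elementary Tauberian Lemma \eqref{lemma2} (via Theorem \eqref{thm7}) converts this to $\psi(x)\sim x$. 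One caveat about your sketch: your mention of residues at $s=0$ and at the trivial zeros belongs to the full explicit-formula variant, in which shifting far to the left requires the Hadamard product for $\zeta$ and the zero-counting estimate $N(T)\ll T\log T$ to control $\zeta'/\zeta$ between zeros; in the standard truncated-Perron proof one shifts only to $\sigma=1-c/\log T$, crosses no zeros whatsoever, and the sole residue collected is $x$ at $s=1$. As written your proposal hybridizes the two regimes, so you should commit to one of them --- the modest shift suffices for your stated error term and avoids apparatus you have not named.
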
 
	Hence, in order to establish the \textbf{Prime Number Theorem}, it only suffices to show that, statement \eqref{thm3} hods true.
	\section{A brief survey on Complex Analysis}
	
	In this section, we shall recall some important results in Complex Analysis, pertinent to the proof. Readers are encouraged to refer to \cite{2} for further explanations in these topics.
	\begin{thm}\label{thm4}
		(\textbf{Riemann-Lebesgue Lemma})  If $f$ is $ L^{1}$ integrable on $\mathbb{R}^{d}$, i.e. to say, if the Lebesgue integral of $|f|$ is finite, then the Fourier transform of $f$ satisfies,
		\begin{center}
			$\hat{f}(z) = \int\limits_{\mathbb{R}^{d}} f(x) e^{-2\pi iz.x} dx \rightarrow 0$  as,   $ |z| \rightarrow \infty .$
		\end{center}
	\end{thm}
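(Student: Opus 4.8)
The plan is to exploit the fact that the Fourier transform acts as a bounded operator from $L^1(\mathbb{R}^d)$ into $L^\infty(\mathbb{R}^d)$, combined with a density argument. First I would record the elementary uniform bound $\|\hat{f}\|_\infty \le \|f\|_{L^1}$, which follows immediately by bringing the modulus inside the integral and using $|e^{-2\pi i z\cdot x}| = 1$. This estimate is the engine that will let me transfer the conclusion from a convenient dense subclass to all of $L^1$.

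Next, I would verify the statement directly on the dense subclass of step functions, i.e.\ finite linear combinations of indicator functions of boxes $\prod_{j=1}^d [a_j, b_j]$. For such an indicator the Fourier transform factors as a product of one-dimensional integrals, each of which evaluates in closed form to $\frac{e^{-2\pi i z_j a_j} - e^{-2\pi i z_j b_j}}{2\pi i z_j}$ and is therefore bounded in modulus by $\min(b_j - a_j,\, (\pi |z_j|)^{-1})$. Since $|z| \to \infty$ forces $\max_j |z_j| \to \infty$, the factor attached to the dominant coordinate decays to zero while the remaining factors stay bounded, so the whole product---and hence any finite linear combination---tends to zero. By linearity this settles the claim for every step function.

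Finally, I would run the approximation step. Given $f \in L^1(\mathbb{R}^d)$ and $\varepsilon > 0$, choose a step function $g$ with $\|f - g\|_{L^1} < \varepsilon$, which is possible by density of step functions in $L^1$. The uniform bound then yields $|\hat{f}(z) - \hat{g}(z)| \le \|f - g\|_{L^1} < \varepsilon$ for every $z$, while the previous step supplies $R > 0$ with $|\hat{g}(z)| < \varepsilon$ whenever $|z| > R$. Combining the two gives $|\hat{f}(z)| < 2\varepsilon$ for $|z| > R$, and since $\varepsilon$ was arbitrary we conclude $\hat{f}(z) \to 0$ as $|z| \to \infty$.

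I expect the main obstacle to be the approximation step, specifically ensuring that the passage to the limit is uniform in the frequency variable $z$. The subtlety is that knowing $\hat{g}(z) \to 0$ for each fixed step function $g$ is not by itself sufficient; one must couple it with the $z$-independent error estimate $\|\hat{f} - \hat{g}\|_\infty \le \|f - g\|_{L^1}$, so that the $L^1$-closeness of $g$ to $f$ controls $\hat{f}$ simultaneously at all large frequencies. An alternative, slicker route avoids the explicit base case: writing $e^{-2\pi i z\cdot x} = -e^{-2\pi i z\cdot(x+w)}$ with $w = z/(2|z|^2)$ and substituting gives $\hat{f}(z) = \tfrac{1}{2}\int [f(x) - f(x-w)]\,e^{-2\pi i z\cdot x}\,dx$, whence $|\hat{f}(z)| \le \tfrac{1}{2}\|f - \tau_w f\|_{L^1} \to 0$ by continuity of translation in $L^1$. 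However, that continuity is itself usually established through the same density argument, so I would favour the direct approach described above.
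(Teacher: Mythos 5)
Your proof is correct, but there is nothing in the paper to compare it against: the paper states the Riemann--Lebesgue Lemma (Theorem \ref{thm4}) purely as background material in its survey of analytic tools, offers no proof, and simply directs the reader to the textbook of Stein and Shakarchi \cite{2}; the lemma is then invoked as a black box in the final step of the argument (Theorem \ref{thm17}). Your argument is the standard density proof and it is sound: the uniform estimate $\|\hat{f}\|_{\infty}\le\|f\|_{L^{1}}$ is immediate; the explicit computation for the indicator of a box $\prod_{j=1}^{d}[a_{j},b_{j}]$ gives a product of factors each bounded by $\min\bigl(b_{j}-a_{j},(\pi|z_{j}|)^{-1}\bigr)$, and since $\max_{j}|z_{j}|\ge|z|/\sqrt{d}$ the whole product is $O(1/|z|)$; linearity and the density of step functions in $L^{1}(\mathbb{R}^{d})$ (a standard fact you may cite) finish the proof. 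You also correctly isolate the only delicate point, namely that the pointwise decay of $\hat{g}$ for each fixed step function $g$ must be coupled with the $z$-independent error bound $\|\hat{f}-\hat{g}\|_{\infty}\le\|f-g\|_{L^{1}}$ in order to obtain a conclusion uniform over all large $|z|$. The alternative route you sketch, writing $\hat{f}(z)=\tfrac{1}{2}\int\bigl(f(x)-f(x-w)\bigr)e^{-2\pi iz\cdot x}\,dx$ with $w=z/(2|z|^{2})$ and appealing to continuity of translation in $L^{1}$, is equally valid and is essentially the proof found in standard real-analysis references such as the one the paper cites; as you note yourself, that continuity rests on the same density mechanism, so the choice between the two routes is a matter of taste rather than of substance.
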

	\begin{thm}\label{thm5}
		(Cauchy Integral Theorem)  In mathematics, the \textbf{Cauchy Integral Theorem} (also known as the \textit{Cauchy-Goursat theorem}) in complex analysis, named after \textit{Augustin-Louis Cauchy}, is an important statement about line integrals for holomorphic functions in the complex plane. Essentially, it says that if two different paths connect the same two points, and a function is holomorphic everywhere "in between" the two paths, then the two path integrals of the function will be the same.\\
		The theorem is usually formulated for closed paths as follows:
		\begin{center}
			Let $U$ be an open subset of $\mathbb{C}$ which is simply connected, let $f : U \rightarrow \mathbb{C}$ be a \textit{holomorphic function}, and let $\gamma$ be a rectifiable path in $U$ whose start point is equal to its end point. Then,
			\begin{center}
				$\oint\limits_{\gamma}f(z)dz=0$.
			\end{center}
		\end{center}
	\end{thm}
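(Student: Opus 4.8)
The plan is to build the theorem from its most elementary instance and then globalize, following the classical Goursat route. First I would establish the base case for triangles: for any closed triangle $T$ whose convex hull lies in $U$, one has $\oint_{\partial T} f(z)\,dz = 0$. The argument is the standard quadrisection. Bisecting each side of $T$ produces four congruent sub-triangles $T^{(1)},\dots,T^{(4)}$; since the contributions along the shared interior edges are traversed in opposite directions and cancel in pairs, $\oint_{\partial T} f = \sum_{j=1}^{4} \oint_{\partial T^{(j)}} f$. By the triangle inequality at least one sub-triangle, call it $T_1$, satisfies $\left| \oint_{\partial T_1} f \right| \ge \tfrac{1}{4}\left| \oint_{\partial T} f \right|$. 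Iterating yields a nested sequence $T \supseteq T_1 \supseteq T_2 \supseteq \cdots$ with $\left| \oint_{\partial T_n} f \right| \ge 4^{-n}\left| \oint_{\partial T} f \right|$, whose diameters and perimeters both decay like $2^{-n}$, so by compactness the triangles shrink to a single point $z_\star \in U$.

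Second, I would exploit holomorphy precisely at $z_\star$. Writing $f(z) = f(z_\star) + f'(z_\star)(z - z_\star) + \psi(z)(z - z_\star)$ with $\psi(z) \to 0$ as $z \to z_\star$, the constant and linear terms possess explicit primitives and hence integrate to zero around each closed contour $\partial T_n$; only the remainder survives, controlled by $\left| \oint_{\partial T_n} f \right| \le \sup_{z \in \partial T_n}|\psi(z)| \cdot \operatorname{diam}(T_n)\cdot\operatorname{perim}(T_n)$. Because the product of diameter and perimeter scales as $4^{-n}$, combining with the lower bound forces $\left| \oint_{\partial T} f \right| \le C\,\sup|\psi| \to 0$, settling the triangle case. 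From here I would pass to a local primitive: on any open disk $D \subseteq U$ centered at $z_0$, define $F(z) = \int_{[z_0,z]} f(w)\,dw$; applying the triangle result to the triangle with vertices $z_0, z, z+h$ gives $F(z+h)-F(z) = \int_{[z,z+h]} f$, and continuity of $f$ then yields $F'(z)=f(z)$. Thus $f$ admits a primitive on every disk inside $U$, and for any closed rectifiable path contained in such a disk the fundamental theorem of calculus for contour integrals gives $\oint_\gamma f = F(\gamma(1)) - F(\gamma(0)) = 0$.

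Finally, I would globalize to an arbitrary simply connected $U$ by a homotopy argument. Since $\gamma$ is closed and $U$ is simply connected, $\gamma$ is null-homotopic in $U$; let $H$ be a homotopy contracting $\gamma$ to a point. The image $H([0,1]^2)$ is compact, so by the Lebesgue number lemma it can be covered by finitely many disks on each of which $f$ has a primitive. Subdividing the homotopy square finely enough that each small cell maps into one such disk, one shows the integral is unchanged as $\gamma$ is deformed across each cell, and hence equals the integral over the constant path, namely $0$.

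The hard part will be this last step: making homotopy invariance fully rigorous for merely rectifiable (rather than piecewise-$C^1$) paths, where $\oint_\gamma f$ must be read as a Riemann--Stieltjes integral and where the cancellation across adjacent cells requires verifying that the local primitives on overlapping disks differ by constants. An alternative that sidesteps part of this is to approximate $\gamma$ by an inscribed polygonal path and invoke continuity of the integral in the path; but either way the bookkeeping of matching primitives on the covering disks is where the real work concentrates.
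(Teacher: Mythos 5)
The paper never actually proves this statement: Theorem \eqref{thm5} is quoted as standard background (the reader is pointed to the complex-analysis reference \cite{2}), and the paragraph that follows it in the paper only sketches the homological reformulation via winding numbers and cycles, without argument. So there is no proof in the paper to compare yours against; what can be assessed is whether your blind proposal is itself a sound proof, and it is. You follow the classical Goursat route: quadrisection gives a nested sequence of triangles with $\left|\oint_{\partial T_n} f\right| \geq 4^{-n}\left|\oint_{\partial T} f\right|$, differentiability at the limit point $z_\star$ kills the integral because the affine part of $f$ has a primitive and the remainder is $o(4^{-n})$, convexity of disks upgrades this to local primitives, and simple connectedness plus the Lebesgue number lemma globalizes the statement. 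You are also right about where the residual work lies: for a merely rectifiable $\gamma$ the intermediate curves of a contracting homotopy need not be rectifiable, so one must either define $\int_\gamma f$ by piecing together local primitives along the path (analytic continuation of primitives, which agrees with the Riemann--Stieltjes integral when $\gamma$ is rectifiable) or approximate $\gamma$ by an inscribed polygon and use uniform continuity of $f$ on a compact neighbourhood of the image of $\gamma$; both repairs are standard and close the gap you flagged.

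One remark on scope. The homotopy version you prove and the homology version the paper gestures at differ in generality: homologous-to-zero is the weaker hypothesis on a cycle in a multiply connected region, and passing from your statement to that one needs an extra argument (e.g.\ Artin's). But for the only use the paper makes of Theorem \eqref{thm5} --- shifting the vertical contour from $\sigma = c$ to $\sigma = 1$ across a rectangle in the proof of Theorem \eqref{thm17} --- even the special case of your triangle/rectangle lemma suffices, since the rectangle is convex and $x^{s-1}h(s)$ is holomorphic on a neighbourhood of it. So your proof is more than adequate for the role the theorem plays in this paper.
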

	A precise \textit{(homology)} version can be stated using \textit{winding numbers}. The winding number of a closed curve around a point $a$ not on the curve is the integral of $\frac{f(z)}{2\pi i}$, where $f(z) = \frac{1}{(z-a)}$ around the curve. It is an integer. Briefly, the path integral along a Jordan curve of a function holomorphic in the interior of the curve, is $0$. Instead of a single closed path we can consider a linear combination of closed paths, where the scalars are integers. Such a combination is called a \textit{closed chain}, and one defines an integral along the chain as a linear combination of integrals over individual paths. A closed chain is called a \textit{cycle} in a region, if it is homologous to $0$ in the region; that is, the winding number, expressed by the integral of $\frac{1}{(z-a)}$ over the closed chain, is $0$ for each point $a$ not in the region. This means that the closed chain does not wind around points outside the region. Then \textbf{Cauchy's Theorem} can be stated as the integral of a function holomorphic in an open set taken around any cycle in the open set is $0$. An example is furnished by the \textbf{ring-shaped region}. This version is crucial for rigorous derivation of \textit{Laurent Series} and \textit{Cauchy's Residue Formula} without involving any physical notions such as cross cuts or deformations. This version enables the extension of Cauchy's theorem to multiply-connected regions analytically.
	
	\begin{thm}\label{thm6}
		(Cauchy Residue Theorem)  In complex analysis, the \textbf{Residue Theorem}, sometimes called \textbf{Cauchy's Residue Theorem}, is a powerful tool to evaluate line integrals of \textit{analytic functions} over closed curves; it can often be used to compute real integrals as well. It generalizes the \textit{Cauchy Integral Theorem} and \textit{Cauchy's Integral Formula}. From a geometrical perspective, it is a special case of the generalized \textbf{Stokes' Theorem}. The statement goes as follows:\\\\
		Suppose $U$ is a \textit{simply connected open subset} of the complex plane, and $a_{1},a_{2},\cdots a_{n}$ are finitely many points of $U$ and $f$ is a function which is defined and holomorphic on $U \slash {a_{1},a_{2},\cdots a_{n}}$. If $\gamma$ is a \textit{closed rectifiable curve} in $U$ which does not meet any of the $a_{k}$,
		\begin{center}
			$\oint\limits_{\gamma}f(z)dz=2\pi i\sum\limits_{k=1}^{n}\eta(\gamma, a_{k})Res(f;a_{k})$.
		\end{center}
		
		If $\gamma$ is a positively oriented simple closed curve, $\eta(\gamma, a_{k}) = 1$ if $a_{k}$ is in the interior of $\gamma$, and $0$ otherwise, so,
		\begin{center}
			$\oint\limits_{\gamma}f(z)dz=2\pi i\sum\limits_{k=1}^{n}Res(f;a_{k})$.
		\end{center}
		with the sum over those $k$ for which $a_{k}$ is inside $\gamma$.
	\end{thm}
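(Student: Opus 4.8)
The plan is to reduce the statement to the simply-connected Cauchy Integral Theorem (Theorem \ref{thm5}) by stripping off the singular behaviour of $f$ at each $a_k$ through its Laurent expansion. First I would fix an index $k$ and exploit the fact that $a_k$ is an isolated singularity of $f$: on a punctured disc $0 < |z - a_k| < R_k$, where $R_k$ is the distance from $a_k$ to the nearest other singularity or to the boundary of $U$, the function admits a Laurent expansion
\[
f(z) = \sum_{m=-\infty}^{\infty} c_m^{(k)}\,(z - a_k)^m .
\]
I would then isolate the \emph{principal part}
\[
P_k(z) := \sum_{m=1}^{\infty} c_{-m}^{(k)}\,(z - a_k)^{-m}, \qquad \operatorname{Res}(f; a_k) = c_{-1}^{(k)} .
\]
A standard Cauchy estimate on the coefficients $c_{-m}^{(k)}$ shows that this series, regarded as a power series in the variable $(z - a_k)^{-1}$, has inner radius of convergence zero; hence $P_k$ converges and is holomorphic on all of $\mathbb{C} \setminus \{a_k\}$.

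Next I would form the auxiliary function $g(z) := f(z) - \sum_{k=1}^{n} P_k(z)$. By construction, in a neighbourhood of any $a_j$ the difference $f - P_j$ has only a removable singularity, since its Laurent expansion there carries no negative powers, while each remaining $P_k$ with $k \neq j$ is already holomorphic at $a_j$; consequently $g$ extends to a function holomorphic on the whole of the simply-connected set $U$. Applying Theorem \ref{thm5} to $g$ along the closed rectifiable curve $\gamma \subset U$ then gives $\oint_\gamma g(z)\,dz = 0$, whence
\[
\oint_\gamma f(z)\,dz = \sum_{k=1}^{n} \oint_\gamma P_k(z)\,dz .
\]

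It then remains to evaluate each integral on the right. Since the image of $\gamma$ is compact and disjoint from $a_k$, the series for $P_k$ converges uniformly there, so I may integrate term by term. For every $m \geq 2$ the function $(z - a_k)^{-m}$ admits the single-valued primitive $(1-m)^{-1}(z - a_k)^{1-m}$ on $\mathbb{C} \setminus \{a_k\}$, and therefore integrates to zero around the closed curve $\gamma$; only the term $m = 1$ survives, yielding
\[
\oint_\gamma P_k(z)\,dz = c_{-1}^{(k)} \oint_\gamma \frac{dz}{z - a_k} = 2\pi i\,\eta(\gamma, a_k)\,\operatorname{Res}(f; a_k),
\]
where the final equality is exactly the definition of the winding number $\eta(\gamma, a_k)$ recorded after Theorem \ref{thm5}. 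Summing over $k$ produces the desired residue formula, and the special case of a positively oriented simple closed curve is immediate, since then $\eta(\gamma, a_k) \in \{0,1\}$ according as $a_k$ lies inside or outside $\gamma$.

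I expect the principal obstacle to be the very first step, namely rigorously producing the Laurent expansion at each $a_k$ and verifying that the principal part $P_k$ extends holomorphically over all of $\mathbb{C} \setminus \{a_k\}$. This rests on the annular form of the Cauchy Integral Formula, which itself relies on the homology (cycle) version of Cauchy's theorem discussed after Theorem \ref{thm5} rather than on the bare simply-connected statement. Some additional care is also needed to justify the term-by-term integration, which I would handle by appealing to the uniform convergence of the Laurent series on the compact image of $\gamma$.
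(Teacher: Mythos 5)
Your proposal is correct, but there is nothing in the paper to compare it against: Theorem \ref{thm6} is part of the paper's survey of background complex analysis and is stated without any proof, the reader being deferred to the references (cf.\ \cite{2}). So the substantive question is whether your argument stands on its own, and it does. The decomposition $g = f - \sum_{k} P_k$ is the classical principal-part proof of the residue theorem, and every step checks out: each $P_k$ is holomorphic on $\mathbb{C}\setminus\{a_k\}$ because the inner radius of convergence of the principal part at an isolated singularity is zero (your Cauchy-estimate justification, $|c_{-m}^{(k)}|\leq M(\rho)\rho^{m}$ for arbitrarily small $\rho$, is the right one); $g$ extends holomorphically across each $a_j$ by removability, so Theorem \ref{thm5} applies on the simply connected $U$ and kills $\oint_{\gamma} g$; and the term-by-term evaluation of $\oint_{\gamma}P_k$ is legitimate since the image of $\gamma$ is compact, hence at positive distance from $a_k$, giving uniform convergence there, and $\gamma$ has finite length. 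The primitive argument disposes of all terms $m\geq 2$ even for merely rectifiable curves, and the surviving $m=1$ term is $2\pi i\,\eta(\gamma,a_k)\,Res(f;a_k)$ precisely by the definition of winding number the paper records in the prose after Theorem \ref{thm5}; the simple-closed-curve case is then immediate since $\eta(\gamma,a_k)\in\{0,1\}$. You are also right to flag the one genuine structural dependency: the existence of the Laurent expansion rests on Cauchy theory for annuli, i.e.\ on the homology (cycle) version of Cauchy's theorem that the paper only sketches informally, so that preliminary step cannot be derived from the bare simply connected statement of Theorem \ref{thm5}, even though your main argument thereafter uses nothing more. A cosmetic remark: your $R_k$ may be $+\infty$ (e.g.\ $U=\mathbb{C}$ with a single singularity), which is harmless but worth noting when you invoke the punctured disc.
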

	Here, $Res(f;a_{k})$ denotes the residue of $f$ at $a_{k}$, and $\eta(\gamma, a_{k})$ is the winding number of the curve $\gamma$ about the point $a_{k}$. This \textit{winding number} is an integer which intuitively measures how many times the curve $\gamma$ winds around the point $a_{k}$; it is positive if $\gamma$ moves in a counter clockwise (mathematically positive) manner around $a_{k}$ and $0$ if $\gamma$ doesn't move around $a_{k}$ at all.
	\vspace{10pt}
	\section{Some Important Results}
	\vspace{10pt}
	\begin{lemma}\label{lemma1}
		Given any arithmetic function $a(n)$, suppose,
		\begin{align*}
			A(x)=\sum\limits_{n\leq x}a(n),
		\end{align*}	
		where, $A(x)=0$, if $x<1$. Then,
		\begin{eqnarray}\label{4}
			\sum\limits_{n\leq x}(x-a)a(n)=\int\limits_{1}^{x}A(t)dt.
		\end{eqnarray}
	\end{lemma}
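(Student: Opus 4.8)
The plan is to recognise this identity as the integral form of Abel's partial summation, and to prove it directly by exploiting the fact that $A(t)$ is a step function. (I read the left-hand summand as $(x-n)\,a(n)$.) The crucial structural observation is that throughout the range of integration we have $t \le x$, so $A(t) = \sum_{n \le t} a(n)$ is always a \emph{finite} sum; this finiteness is exactly what makes the argument elementary and removes any need to worry about convergence or about justifying limit interchanges by dominated-convergence machinery.

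First I would write the integrand out explicitly as
\[
\int_1^x A(t)\,dt = \int_1^x \Big( \sum_{n \le t} a(n) \Big)\, dt ,
\]
and then interchange the (finite) summation with the integration. Since only the finitely many indices $n$ with $n \le x$ can ever appear, and a given term $a(n)$ is counted in $A(t)$ precisely when $n \le t$, the index $n$ survives inside the integral over exactly the subinterval $t \in [n, x]$; because arithmetic functions are supported on $n \ge 1$, the lower limit $1$ imposes no further restriction. This yields
\[
\int_1^x A(t)\,dt = \sum_{n \le x} a(n) \int_n^x dt = \sum_{n \le x} a(n)\,(x - n),
\]
which is the claimed identity.

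Alternatively, and perhaps more transparently, I would argue by integrating the step function piecewise: writing $N = \lfloor x \rfloor$, the function $A(t)$ is constant on each interval $[k, k+1)$ with value $A(k)$, so the integral decomposes into rectangles over $[1,2), [2,3), \dots, [N, x]$, and re-collecting the total contribution of each individual $a(n)$ across all intervals to its right recovers the coefficient $(x-n)$. I expect the only point requiring care to be the bookkeeping at the boundaries — the convention $A(t) = 0$ for $t < 1$, which guarantees nothing spurious enters from below, and the final partial interval $[N, x]$ — but neither is a genuine obstacle. The interchange of sum and integral is the conceptual heart of the argument, and since the sum is finite it is rigorously justified with no further analytic input.
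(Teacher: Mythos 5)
Your proof is correct --- and you rightly read the misprinted summand $(x-a)$ as $(x-n)$ --- but it takes a genuinely different route from the paper. The paper deduces the identity from \emph{Abel's summation formula}: it invokes the identity $\sum_{n\leq x} a(n)f(n) = A(x)f(x) - \int_1^x A(t)f'(t)\,dt$ (valid when $f$ has a continuous derivative on $[1,x]$), specializes to $f(t)=t$ to get $\sum_{n\leq x} n\,a(n) = xA(x) - \int_1^x A(t)\,dt$, and rearranges, using $xA(x)=\sum_{n\leq x}x\,a(n)$, to obtain $\int_1^x A(t)\,dt = \sum_{n\leq x}(x-n)a(n)$. You instead prove the identity from scratch: on $[1,x]$ you write $A(t)=\sum_{n\leq x} a(n)\,\mathbf{1}_{[n,x]}(t)$, swap the finite sum with the integral, and compute $\int_n^x dt = x-n$ (or, equivalently, integrate the step function interval by interval). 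Your argument buys self-containedness and minimal hypotheses --- no auxiliary theorem, no smooth test function, and the sum--integral interchange is trivially licensed by finiteness, exactly as you note. The paper's argument buys brevity by outsourcing the bookkeeping to a standard tool (Abel's identity, which is itself proved by essentially the manipulation you perform), and it situates the lemma inside the general partial-summation machinery used repeatedly in analytic number theory. Both are rigorous proofs of the same statement.
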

	
	We apply \textit{Abel's Identity} to prove the result.
	\begin{thm}
		\begin{eqnarray}\label{5}
			\sum\limits_{n\leq x}a(n)f(n)=A(x)f(x)-\int\limits_{1}^{x}A(t)f'(t)dt 
		\end{eqnarray}
		
		provided $f$ has a continuous derivative on $[1,x]$.
	\end{thm}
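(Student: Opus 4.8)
The plan is to establish this summation formula --- the classical \emph{Abel Summation Formula} --- by converting the discrete sum on the left into a boundary term plus an integral, exploiting the fact that the partial-sum function $A(t)$ is piecewise constant. First I would set $k = \lfloor x \rfloor$ and observe that, since no integer lies strictly between $k$ and $x$, the sum $\sum_{n\leq x} a(n) f(n)$ coincides with the finite sum $\sum_{n=1}^{k} a(n) f(n)$, and moreover $A(x) = A(k)$.

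The core step is a discrete integration by parts. Writing each coefficient as the forward difference $a(n) = A(n) - A(n-1)$ and recalling the convention $A(0) = 0$, I would apply summation by parts (Abel's lemma) to obtain
\begin{align*}
	\sum_{n=1}^{k} a(n) f(n) = A(k) f(k) - \sum_{n=1}^{k-1} A(n)\bigl(f(n+1) - f(n)\bigr).
\end{align*}
Because $f$ has a continuous derivative on $[1,x]$, the Fundamental Theorem of Calculus gives $f(n+1) - f(n) = \int_n^{n+1} f'(t)\,dt$ on each unit subinterval; and since $A(t)$ is constant and equal to $A(n)$ throughout $[n, n+1)$, each term $A(n)\bigl(f(n+1)-f(n)\bigr)$ equals $\int_n^{n+1} A(t) f'(t)\,dt$. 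Summing these identities collapses the discrete sum into the single integral $\int_1^{k} A(t) f'(t)\,dt$.

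It remains to replace the boundary term $A(k) f(k)$ by $A(x) f(x)$ and to extend the upper limit of integration from $k$ to $x$. Using $A(x) = A(k)$ together with $f(x) - f(k) = \int_k^x f'(t)\,dt$ and the constancy of $A$ on $[k,x]$, I would write $A(x) f(x) - A(k) f(k) = \int_k^x A(t) f'(t)\,dt$, which when combined with the previous display yields exactly
\begin{align*}
	\sum_{n\leq x} a(n) f(n) = A(x) f(x) - \int_1^x A(t) f'(t)\,dt.
\end{align*}

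The main obstacle here is not analytic depth but careful bookkeeping: one must track the step-function behaviour of $A(t)$ across integer points and manage the endpoint adjustment between $k = \lfloor x \rfloor$ and the real argument $x$. The continuity of $f'$ is precisely what licenses the termwise application of the Fundamental Theorem of Calculus and the identification of each discrete difference with an integral over the corresponding subinterval.
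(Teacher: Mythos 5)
Your proof is correct, and it is the standard summation-by-parts argument (essentially Apostol's own proof of Abel's identity): the reduction to $k=\lfloor x\rfloor$, the discrete Abel transformation using $a(n)=A(n)-A(n-1)$ with the convention $A(0)=0$, the identification of each term $A(n)\bigl(f(n+1)-f(n)\bigr)$ with $\int_n^{n+1}A(t)f'(t)\,dt$ via the constancy of $A$ on $[n,n+1)$, and the endpoint correction $A(x)f(x)-A(k)f(k)=\int_k^x A(t)f'(t)\,dt$ all check out, with the continuity of $f'$ used exactly where it is needed. It is worth pointing out, however, that your route is genuinely different from the paper's, because the paper never actually proves this theorem: the proof environment attached to it is really a derivation of the preceding Lemma~\ref{lemma1} (the identity $\sum_{n\le x}(x-n)a(n)=\int_1^x A(t)\,dt$), obtained by specializing $f(t)=t$ in the theorem, and it concludes with ``the result follows from Theorem~\eqref{5}'' --- which is circular if read as a proof of \eqref{5} itself. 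In other words, the paper treats Abel's identity as a known black box and uses it, whereas you supply the missing proof from first principles; your argument therefore fills a genuine gap in the paper's exposition rather than duplicating it. One cosmetic remark: $a(n)=A(n)-A(n-1)$ is a backward, not forward, difference of $A$, though nothing in your argument depends on the name.
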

	\begin{proof}
		Choosing $f(t)=t$, we obtain,
		\begin{align*}
			\sum\limits_{n\leq x}a(n)f(n)=\sum\limits_{n\leq x}na(n)\mbox{ \hspace{10pt}and, \hspace{10pt}}A(x)f(x)=n\sum\limits_{n\leq x}a(n).
		\end{align*}
		Thus the result follows from \textit{Theorem} \eqref{5}.
	\end{proof}
	In order to estimate $A(x)$ as in above, we require the following lemma can also be regarded as a form of \textit{L'H\`{o}spital's} rule for increasing piece-wise linear functions.
	\begin{lemma}\label{lemma2}
		Let $A(x)=\sum\limits_{n\leq x}a(n)$ and let $A_{1}(x)=\int\limits_{1}^{x}A(t)dt$. Assume also that, $a(n)\geq 0$,       $\forall$  $ n\in \mathbb{N}$. Given the asymptotic relation ,
		\begin{eqnarray}\label{6}
			A_{1}(x)\sim Lx^{c}    \mbox{    as     } x\rightarrow \infty
		\end{eqnarray}
		For some $c>0$ and $L>0$, we shall have,
		\begin{eqnarray}\label{7}
			A(x)\sim cLx^{c-1}       \mbox{    as   } x\rightarrow \infty
		\end{eqnarray}
		In other words, formal differentiation of \eqref{6} gives	the result \eqref{7}.
	\end{lemma}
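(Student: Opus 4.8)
The plan is to exploit the monotonicity of $A(x)$, which is immediate from the hypothesis $a(n)\geq 0$: each summand being nonnegative, $A(x)$ is a nondecreasing step function of $x$. The idea is then a classical sandwiching argument --- trap the monotone function $A(x)$ between difference quotients of its own integral $A_{1}(x)$, substitute the given asymptotic \eqref{6}, and tune a scaling parameter to recover the factor $c$. This is, in effect, a Tauberian statement in which the nonnegativity of $a(n)$ plays the role of the Tauberian side condition.

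First I would fix reals $\beta,\alpha$ with $0<\beta<1<\alpha$. Since $A$ is nondecreasing we have $A(t)\geq A(x)$ for $t\in[x,\alpha x]$ and $A(t)\leq A(x)$ for $t\in[\beta x,x]$, so integrating over these intervals gives
\begin{align*}
A_{1}(\alpha x)-A_{1}(x)&=\int_{x}^{\alpha x}A(t)\,dt\geq(\alpha-1)x\,A(x),\\
A_{1}(x)-A_{1}(\beta x)&=\int_{\beta x}^{x}A(t)\,dt\leq(1-\beta)x\,A(x).
\end{align*}
Rearranging produces the two-sided bound $\dfrac{A_{1}(x)-A_{1}(\beta x)}{(1-\beta)x}\leq A(x)\leq\dfrac{A_{1}(\alpha x)-A_{1}(x)}{(\alpha-1)x}$.

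Next I would divide through by $x^{c-1}$ and let $x\to\infty$, applying \eqref{6} at each of the fixed arguments $x$, $\alpha x$ and $\beta x$. Since $A_{1}(\alpha x)\sim L(\alpha x)^{c}$ and $A_{1}(x)\sim Lx^{c}$, the upper bound forces $\limsup_{x\to\infty}A(x)/x^{c-1}\leq L\,\frac{\alpha^{c}-1}{\alpha-1}$, while the lower bound forces $\liminf_{x\to\infty}A(x)/x^{c-1}\geq L\,\frac{1-\beta^{c}}{1-\beta}$. These inequalities hold for every admissible $\alpha$ and $\beta$, so I am free to send $\alpha\to1^{+}$ and $\beta\to1^{-}$ afterwards.

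The decisive observation is that $\frac{\alpha^{c}-1}{\alpha-1}$ and $\frac{1-\beta^{c}}{1-\beta}$ are exactly the difference quotients of $t\mapsto t^{c}$ at $t=1$, each converging to the derivative $c$. This squeezes
\begin{align*}
cL\leq\liminf_{x\to\infty}\frac{A(x)}{x^{c-1}}\leq\limsup_{x\to\infty}\frac{A(x)}{x^{c-1}}\leq cL,
\end{align*}
so the limit exists and equals $cL$, which is precisely \eqref{7}. The main obstacle is the ordering of the two limiting processes: the asymptotic \eqref{6} may only be invoked with $\alpha$ and $\beta$ held fixed as $x\to\infty$, and only afterwards may $\alpha,\beta$ be driven to $1$; conflating the two limits would be fallacious, so the argument must keep them strictly separated.
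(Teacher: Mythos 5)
Your proof is correct and follows essentially the same route as the paper's: both exploit the monotonicity of $A(x)$ to sandwich it between difference quotients of $A_{1}$, apply the asymptotic \eqref{6} with the scaling parameters held fixed, and then let those parameters tend to $1$ so that the difference quotients of $t\mapsto t^{c}$ at $t=1$ produce the factor $c$. The only differences are cosmetic --- your $\alpha>1$, $\beta<1$ are the paper's $\beta>1$, $\alpha<1$, and you state both one-sided bounds symmetrically where the paper details the upper bound and invokes ``similar arguments'' for the lower one.
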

	\begin{proof}
		The function $A(x)$ is increasing, since the $a(n)$ 's are non-negative. Consider any $\beta >1$. Thus, we get,
		\begin{align*}
			A_{1}(\beta x)-A_{1}(x)=\int\limits_{x}^{\beta x}A(u)du\geq \int\limits_{x}^{\beta x}A(x)du=A(x)(\beta (x)-x)
			=x(\beta -1)A(x).
		\end{align*}
		This implies,
		\begin{align*}
			xA(x)\leq \frac{1}{(\beta -1)}{A_{1}(\beta x)-A_{1}(x)},
		\end{align*}
		or,
		\begin{align*}\label{24}
			\frac{A(x)}{x^{c-1}}\leq \frac{1}{(\beta -1)}\{\frac{A_{1}(\beta x)}{(\beta x )^{c}}\beta ^{c}-\frac{A_{1}(x)}{x^{c}}\}.
		\end{align*}	
		Keeping $\beta $ fixed and taking $x\rightarrow \infty$ in \eqref{24}, we obtain,
		\begin{align}
			\limsup\limits_{x\rightarrow \infty}\frac{A(x)}{x^{c-1}}\leq \frac{1}{\beta -1}(L\beta ^{c}- L)=L\frac{\beta^{c}-1}{\beta -1}.
		\end{align}	
		Further, consider $\beta \rightarrow 1^{+}$. The quotient on the right is the difference quotient for the derivative of $x^{c}$ at $x=1$ and has the limit $c$. Therefore,
		\begin{eqnarray}\label{8}
			\limsup\limits_{x\rightarrow \infty}\frac{A(x)}{x^{c-1}}\leq cL.
		\end{eqnarray}
		Assume any $\alpha$ with $0<\alpha <1$ and consider the difference, $\{A_{1}(x)-A_{1}(\alpha x)\}$. By similar arguments done previously during the proof of this lemma, we get,
		\begin{align}
			\liminf\limits_{x\rightarrow \infty}\frac{A(x)}{x^{c-1}}\geq L\frac{1-\alpha ^{c}}{1-\alpha}.
		\end{align}	
		Now, as $\alpha\rightarrow 1^{-}$, the term in the R.H.S. tends to $cL$. This, together with \eqref{8} evokes that,
		\begin{align}
			\frac{A(x)}{x^{c-1}} \rightarrow cL\mbox{\hspace{10pt} as\hspace{10pt}} x\rightarrow \infty.
		\end{align} 
		And we are done.
	\end{proof}
	A priori if we assume, $a(n)=\Lambda(n)$, then $A(x)=\psi(x)$, $A_{1}(x)=\psi_{1}(x)$, and $a_{n}\geq 0$.\\
	It follows from lemmas \eqref{lemma1} and \eqref{lemma2},
	\begin{thm}\label{thm7}
		\begin{eqnarray}\label{9}
			\psi_{1}(x)=\sum\limits_{n\leq x}(x-n)\lambda(n).
		\end{eqnarray}
		Where, the \textit{Liouville's Function}, $\lambda(n)$ is defined as follows:
		\begin{center}
			$\lambda(n) 
			=\left\{
			\begin{array}{cc}
				1\mbox{ }, &\mbox{ if }n=1\\
				\hspace{30pt}(-1)^{\sum\limits_{i=1}^{k}{\alpha_{i}}} \mbox{ },  &\mbox{     if } \mbox{  } n=\prod\limits_{i=1}^{k}{p_{i}}^{\alpha_{i}}.
			\end{array}
			\right.$
		\end{center}
		Also the asymptotic relation, $\psi_{1}(x)\sim \frac{x^{2}}{2}$ implies, $\psi(x)\sim x$	as $x\rightarrow \infty$.
	\end{thm}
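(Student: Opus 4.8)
The plan is to obtain both assertions as direct specializations of the two lemmas just established, applied to the arithmetic function $a(n) = \Lambda(n)$. First I would record the elementary bookkeeping. Since $\Lambda(n)$ assumes only the values $\log p > 0$ and $0$, the non-negativity hypothesis $a(n) \geq 0$ required by Lemma \ref{lemma2} holds automatically. Moreover, by Definition \ref{def2} the partial sums are $A(x) = \sum_{n \leq x}\Lambda(n) = \psi(x)$, and therefore $A_{1}(x) = \int_{1}^{x} A(t)\,dt = \int_{1}^{x}\psi(t)\,dt =: \psi_{1}(x)$. With this dictionary in place, the theorem reduces to reading off the two lemmas.

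For the identity \eqref{9} I would invoke Lemma \ref{lemma1} verbatim with this choice of $a(n)$. The lemma asserts $\sum_{n \leq x}(x-n)a(n) = \int_{1}^{x} A(t)\,dt$; substituting $a(n) = \Lambda(n)$ and $A(t) = \psi(t)$ gives $\sum_{n \leq x}(x-n)\Lambda(n) = \psi_{1}(x)$, which is exactly the claimed formula. I would note here that the summand in \eqref{9} must be the von Mangoldt function $\Lambda$, consistent with the standing choice $a(n) = \Lambda(n)$; the Liouville symbol $\lambda$ displayed in the statement is a typographical slip, since the Liouville function is not non-negative and would both violate the hypothesis of Lemma \ref{lemma2} and break the identification $A(x) = \psi(x)$.

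For the implication I would apply Lemma \ref{lemma2} with the constants $c = 2$ and $L = \tfrac{1}{2}$. Under this choice the hypothesis \eqref{6} becomes precisely the assumed relation $A_{1}(x) = \psi_{1}(x) \sim \tfrac{1}{2}x^{2} = Lx^{c}$, and the conclusion \eqref{7} reads $A(x) = \psi(x) \sim cL\,x^{c-1} = 2 \cdot \tfrac{1}{2}\cdot x = x$. This delivers $\psi(x) \sim x$ as $x \to \infty$, which is the content of Theorem \ref{thm3}.

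Within the present theorem the argument is purely mechanical once the two lemmas are granted, so I do not anticipate a genuine obstacle here; the only point demanding a moment's care is the correct matching of the pair $(c,L) = (2,\tfrac{1}{2})$ so that the prefactor $cL$ collapses to $1$. The real difficulty is displaced entirely onto verifying the antecedent $\psi_{1}(x) \sim \tfrac{1}{2}x^{2}$, which is taken as an unproven hypothesis at this stage and whose eventual proof will require the complex-analytic machinery foreshadowed in the abstract: a Perron-type contour representation of $\psi_{1}$, the analytic continuation and non-vanishing of $\zeta(s)$ on the line $\operatorname{Re} s = 1$, and a residue computation supplying the main term $\tfrac{1}{2}x^{2}$.
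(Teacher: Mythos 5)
Your proposal matches the paper's own treatment exactly: the paper proves Theorem \ref{thm7} precisely by setting $a(n)=\Lambda(n)$, so that $A(x)=\psi(x)$, $A_1(x)=\psi_1(x)$ and $a(n)\geq 0$, then citing Lemma \ref{lemma1} for the identity \eqref{9} and Lemma \ref{lemma2} (with $c=2$, $L=\tfrac{1}{2}$) for the implication $\psi_1(x)\sim \tfrac{x^2}{2} \Rightarrow \psi(x)\sim x$. You are also correct that the Liouville symbol $\lambda(n)$ in the statement is a typographical slip for the von Mangoldt function $\Lambda(n)$, as the paper's own preceding remark implicitly assumes.
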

	
	Our next goal is to express $\frac{\psi_{1}(x)}{x^{2}}$ as a \textit{Contour Integral} involving the \textit{zeta function} ( We shall provide an overview of the \textit{Riemann Zeta Function} in the next section ). For this, we will require the special cases, $k=1$ and $k=2$ of the following lemma on contour integrals.
	
	\begin{lemma}\label{lemma3}
		If $c>0$ and $u>0$, then for every integer $k\geq 1$, we have,
		\begin{center}
			$\frac{1}{2\pi i}\int\limits_{c-\infty i}^{c+\infty i}\frac{u^{-z}}{z(z+1)(z+2)\cdots (z+k)}dz=
			\left\{
			\begin{array}{cc}
				\frac{1}{k!}(1-u)^{k}$ $, &\mbox{ if }0<u\leq 1,\\
				0$ $, &\mbox{ if }u>1.
			\end{array}
			\right. $
		\end{center}
		the integral being absolutely convergent.
	\end{lemma}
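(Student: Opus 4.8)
The plan is to evaluate this integral by contour-shifting and a residue computation, splitting into the two cases $0<u\le 1$ and $u>1$ according to the sign of the exponent in $u^{-z}=e^{-z\log u}$. The integrand
\[
F(z)=\frac{u^{-z}}{z(z+1)(z+2)\cdots(z+k)}
\]
is meromorphic in the whole plane, with simple poles exactly at $z=0,-1,-2,\dots,-k$, and it decays like $|z|^{-(k+1)}$ on vertical and horizontal segments far from these poles, which is what makes the vertical line integral absolutely convergent (a factor $u^{-c}$ times $\int |dz|/|z|^{k+1}$) and what will let me discard the arcs.

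\emph{Case $u>1$.} Here $\log u>0$, so $|u^{-z}|=u^{-\operatorname{Re}z}$ decays as $\operatorname{Re}z\to+\infty$. First I would close the contour to the right with a large semicircle (or rectangular) arc $\Gamma_R$ in the half-plane $\operatorname{Re}z\ge c$. On that arc $|u^{-z}|\le u^{-c}\le 1$ and $|F(z)|=O(R^{-(k+1)})$, so by the standard estimate $\bigl|\int_{\Gamma_R}F\bigr|\le \text{(length)}\cdot\max|F|\to 0$ as $R\to\infty$. Since the closed contour to the right encloses no poles (all poles have $\operatorname{Re}z\le 0<c$), Cauchy's Integral Theorem (Theorem~\ref{thm5}) gives that the closed-contour integral is $0$, and hence the vertical line integral equals $0$. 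This yields the second branch.

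\emph{Case $0<u\le 1$.} Now $\log u\le 0$, so $u^{-z}$ decays as $\operatorname{Re}z\to-\infty$, and I would instead close the contour to the \emph{left}, with an arc $\Gamma_R'$ in $\operatorname{Re}z\le c$ that sweeps past all the poles $0,-1,\dots,-k$. The same $O(R^{-(k+1)})$ decay kills the arc in the limit. The closed contour now encloses all $k+1$ simple poles, so by the Residue Theorem (Theorem~\ref{thm6}) the line integral equals the sum of residues. At $z=-j$ the residue is
\[
\operatorname{Res}_{z=-j}F=\frac{u^{\,j}}{\prod_{i\ne j}(i-j)}=\frac{u^{\,j}}{(-1)^{j}\,j!\,(k-j)!},
\]
where I have used $\prod_{i\ne j}(i-j)=(-1)^{j}\,j!\,(k-j)!$. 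Summing over $j=0,\dots,k$ and recognizing the binomial coefficient $\binom{k}{j}=\frac{k!}{j!(k-j)!}$ gives
\[
\frac{1}{2\pi i}\int_{c-\infty i}^{c+\infty i}F(z)\,dz=\sum_{j=0}^{k}\frac{(-1)^{j}u^{\,j}}{j!\,(k-j)!}=\frac{1}{k!}\sum_{j=0}^{k}\binom{k}{j}(-u)^{j}=\frac{(1-u)^{k}}{k!},
\]
which is exactly the first branch.

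\emph{The main obstacle} I anticipate is not the residue bookkeeping but the rigorous justification that the arc integrals vanish, since I am closing a contour of infinite vertical extent. The clean way is to truncate the vertical line at height $T$, close with a finite arc of radius $R$ (or a rectangle) chosen to avoid the poles, apply the appropriate theorem on the resulting closed contour, and then send $R,T\to\infty$ using the uniform $O(|z|^{-(k+1)})$ bound together with $|u^{-z}|\le u^{-c}$ (resp. $u^{-\operatorname{Re}z}\le 1$) on the correct side; the exponent $k\ge 1$ guarantees enough decay that the arc contribution tends to $0$ in both cases. Care is also needed at the boundary value $u=1$, where $u^{-z}=1$ and the integrand is still $O(|z|^{-(k+1)})$ so absolute convergence and the left-closure argument continue to hold, giving $\tfrac{1}{k!}(1-u)^{k}=0$ consistently.
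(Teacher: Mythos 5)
Your proposal is correct and takes essentially the same route as the paper, whose own proof merely rewrites the integrand as $u^{-z}\Gamma(z)/\Gamma(z+k+1)$ and cites Apostol (pp.~281--282) for exactly the argument you give --- close to the right (no poles, hence $0$) for $u>1$, close to the left and sum the residues at $z=0,-1,\dots,-k$ for $0<u\le 1$ --- the only cosmetic difference being that you compute the residues directly from the product $\prod_{i\ne j}(i-j)=(-1)^j j!\,(k-j)!$ rather than from the known residues of $\Gamma(z)$. One small slip worth fixing: on the left-hand arc the correct uniform bound is $|u^{-z}|=u^{-\mathrm{Re}\,z}\le u^{-c}$ rather than $\le 1$ (for $0<u<1$ the function $u^{-x}$ is increasing in $x$, so it exceeds $1$ where $0<\mathrm{Re}\,z\le c$), but since this is still a constant independent of $R$, your $O(R^{-k})$ arc estimate and the rest of the proof go through unchanged.
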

	\begin{proof}
		We observe that, the integrand is $=\frac{u^{-z}\Gamma(z)}{\Gamma(z+k+1)}$ ( This follows by repeatedly using the functional equation, $\Gamma(z+1)=z.\Gamma(z)$ ), and application of \textit{Cauchy's Residue Theorem}. ( For detailed proof of the lemma, readers can see  \cite[p.~281-282]{1} )
	\end{proof}
	\section{Analytic Continuation Property of $\zeta(s)$}
	\subsection{Motivation to study $\zeta(s)$}

	\textit{Riemann Zeta Function} \cite{1} is an integral part of \textit{Analytic Number Theory} \cite{3} \cite{4}, often treated as a special case of the \textit{Hurwitz Zeta Function} $\zeta(s,a)$, defined for $Re(s)>1$, as the series,
	\begin{center}
		$\zeta(s,a)=\sum\limits_{n=0}^{\infty}\frac{1}{(n+a)^s}$,               where, $a\in \mathbb{R}$, $0<a\leq 1$ is fixed.
	\end{center}
	\textit{Riemann Zeta Function} was first introduced by \textit{Leonhard Euler} in the first half of eighteenth century, using only \textit{Real numbers}. Also, he even computed the values of the zeta function at even positive integers.\par
	Later on, famous mathematician \textit{Bernhard Riemann} extended Euler's definition of the Riemann Zeta Function on $\mathbb{R}$ to the field of \textit{Complex Numbers}, also deriving the \textit{meromorphic continuation} and \textit{functional equation}, and gave us an idea about the relationship between the zeroes of the Riemann Zeta Function and the \textit{distribution of prime numbers}. The details can be found in his article titled \textit{On the Number of Primes less than a given Magnitude} published in $1859$. That's the reason why, afterwards, this function was named after him.\par 
	\textit{Riemann Zeta Function} is hugely significant in the field of \textit{Number Theory}. For example, $\zeta(2)$ provides solution to the famous \textit{"Basel Problem"}. Also, famous Greek-French mathematician \textit{Roger Ap\'{e}ry} proved in the year $1979$ that, $\zeta(3)$ is \textit{irrational}. \textit{Euler} established the fact that, the Riemann Zeta Function yields \textit{rational values} at the negative integer points and moreover, these values are incredibly useful in the field of \textit{Modular Forms}. Functions like, \textit{Dirichlet Series}, \textit{Dirichlet L-functions}, and \textit{L-functions} are often considered to be generalisation of the \textit{Riemann Zeta Function}.
	
	\subsection{Analytic Properties of $\zeta(s)$}
	
	Enough with its historical significance, let us give a proper definition of the \textit{Riemann Zeta Function} below.
	\begin{dfn}
		(\textbf{Riemann Zeta Function})  The \textit{Riemann Zeta Function} $\zeta(s)$ is defined as,
		\begin{center}
			$\zeta(s)=\sum\limits_{n=1}^{\infty}\frac{1}{n^{s}}$,     where,  $s\in \mathbb{C}$, with $Re(s)>1$.
		\end{center}
		And we can further extend the \textit{Riemann Zeta Function} to the whole complex plane using the \textit{Analytic Continuation Property} of the function defined for $Re(s)>1$.
	\end{dfn}
	A priori from the definition of the \textit{Gamma Function} $\Gamma(s)$ given as,
	\begin{center}
		$\Gamma(s)=\int\limits_{0}^{\infty}{x^{s-1}}{e^{-x}}dx$,  where, $s\in \mathbb{C}$,  $Re(s)>0$,
	\end{center}
	we can thus provide an alternative definition of the \textit{Riemann Zeta Function} as,
	\begin{dfn}
		We have,
		\begin{center}
			$\zeta(s)=\frac{1}{\Gamma(s)}\int\limits_{0}^{\infty}\frac{x^{s-1}}{e^{x}-1}dx$,    where, $s\in \mathbb{C}$ and,  $Re(s)>1$.
		\end{center}
	\end{dfn}
	As for some important properties of $\zeta(s)$, they can be summarized as follows:
	\begin{prop}
		$\zeta(s)$ is \textit{meromorphic} on $\mathbb{C}$, i.e., $\zeta(s)$ is \textit{holomorphic} everywhere except for a \textit{simple pole} at $s=1$ with \textit{residue} $1$.
	\end{prop}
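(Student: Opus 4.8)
The plan is to first secure holomorphy on the half-plane of absolute convergence, then manufacture an integral representation valid there whose continuation isolates the single pole. On the region where $\mathrm{Re}(s) > 1$ I would note that each summand $n^{-s} = e^{-s\log n}$ is entire, and that on any compact subset with $\mathrm{Re}(s) \ge 1+\delta$ the series $\sum_n n^{-s}$ is dominated by the convergent real series $\sum_n n^{-1-\delta}$; by the Weierstrass $M$-test the convergence is uniform on compacta, so by Morera's theorem together with \textit{Theorem} \eqref{thm5} the limit $\zeta(s)$ is holomorphic there. The first substantive move is to convert the sum into an integral using \textit{Abel's identity} \eqref{5} with $a(n)\equiv 1$, so that $A(x)=\lfloor x\rfloor$, and $f(t)=t^{-s}$. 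This produces $\sum_{n\le x} n^{-s} = \lfloor x\rfloor x^{-s} + s\int_1^x \lfloor t\rfloor\, t^{-s-1}\,dt$; letting $x\to\infty$ annihilates the boundary term and yields $\zeta(s)=s\int_1^\infty \lfloor t\rfloor\, t^{-s-1}\,dt$ for $\mathrm{Re}(s)>1$.

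Writing $\lfloor t\rfloor = t - \{t\}$ and separating the two contributions then gives the key representation
\begin{align*}
\zeta(s) = \frac{s}{s-1} - s\int_1^\infty \frac{\{t\}}{t^{s+1}}\,dt.
\end{align*}
Here the rational function $\tfrac{s}{s-1} = 1 + \tfrac{1}{s-1}$ is meromorphic on all of $\mathbb{C}$ with a single simple pole at $s=1$ of residue $1$, whereas the remaining integral converges absolutely for $\mathrm{Re}(s) > 0$ because $|\{t\}|\le 1$ forces its integrand to be $O\!\big(t^{-\mathrm{Re}(s)-1}\big)$. A Morera-plus-Fubini argument (equivalently, differentiation under the integral sign, justified by local uniform bounds) shows this integral is holomorphic on $\{\mathrm{Re}(s)>0\}$. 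Hence $\zeta$ extends holomorphically to $\{\mathrm{Re}(s)>0\}\setminus\{1\}$, and since the integral term is regular at $s=1$, the residue of $\zeta$ there is precisely the residue $1$ inherited from $\tfrac{s}{s-1}$, which already settles the quantitative part of the claim.

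To push the continuation to the whole plane I would invoke \textit{Riemann's functional equation}, which the paper treats as available: from $\zeta(s) = 2^s\pi^{s-1}\sin(\pi s/2)\,\Gamma(1-s)\,\zeta(1-s)$, the right-hand side is holomorphic on $\{\mathrm{Re}(s)<0\}$, since there $\mathrm{Re}(1-s)>1$ makes $\zeta(1-s)$ holomorphic while the poles of $\Gamma(1-s)$, located at $s=1,2,3,\dots$, all lie harmlessly to the right of this region. Patching this against the integral representation on the overlapping strip $0<\mathrm{Re}(s)<1$, where both descriptions agree by uniqueness of analytic continuation, assembles a function meromorphic on all of $\mathbb{C}$ whose only singularity is the simple pole at $s=1$. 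As an alternative route that bypasses the functional equation, one may instead iterate the Euler–Maclaurin expansion, extracting one additional term of the asymptotic summation at each stage to advance the region of holomorphy leftward by one unit, thereby covering $\mathbb{C}$ in the limit.

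I expect the principal obstacle to be the two holomorphy assertions rather than the algebra. Establishing that the tail integral $\int_1^\infty \{t\}\,t^{-s-1}\,dt$ defines a \emph{holomorphic} function, and not merely a convergent one, demands care in interchanging the limiting and integration processes; and, in the functional-equation step, one must confirm that the right-hand side genuinely has no poles for $\mathrm{Re}(s)<0$, relying on the fact that the poles of $\Gamma(1-s)$ sit entirely in $\{s=1,2,3,\dots\}$ so that no spurious singularity of $\zeta$ can arise off $s=1$.
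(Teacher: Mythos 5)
Your proposal is correct in substance, but it follows a genuinely different --- and far more complete --- route than the paper. The paper's entire proof of this Proposition is the single assertion that $\lim_{s\rightarrow 1}(s-1)\zeta(s)=1$, which at best verifies the value of the residue once one already knows that $\zeta$ admits a meromorphic continuation to a neighbourhood of $s=1$; it constructs no continuation at all. You actually build one: Weierstrass $M$-test plus Morera for holomorphy on $\mathrm{Re}(s)>1$, then Abel summation to obtain $\zeta(s)=\frac{s}{s-1}-s\int_1^\infty \{t\}\,t^{-s-1}\,dt$, which is exactly the $N=1$ case of the representation \eqref{16} that the paper itself proves later (Theorem \eqref{thm10}) and relies on for its order estimates, and finally the functional equation to reach $\mathrm{Re}(s)<0$. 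What your approach buys is an honest proof of meromorphy in which the location of the pole and its residue are read off from an explicit decomposition; what the paper's one-liner buys is only consistency of the residue value. Two details in your final step deserve care. First, $\{\mathrm{Re}(s)>0\}$ and $\{\mathrm{Re}(s)<0\}$ do not overlap, so your ``patching on the strip $0<\mathrm{Re}(s)<1$'' only makes sense if the right-hand side of the functional equation is read with $\zeta(1-s)$ given by your integral representation; that reading extends the right-hand side to all of $\mathrm{Re}(s)<1$, but then at $s=0$ you must observe that the simple zero of $\sin(\pi s/2)$ cancels the simple pole of $\zeta(1-s)$ --- otherwise the line $\mathrm{Re}(s)=0$ is never covered by either region. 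Second, the functional equation as the paper states it (via Hurwitz's formula) already presupposes a continuation of $\zeta$ beyond $\mathrm{Re}(s)>1$ for its left-hand side to be meaningful, so invoking it here carries a latent circularity; your alternative of iterating the Euler--Maclaurin expansion to advance the half-plane of holomorphy one unit at a time is the cleaner way to finish and avoids this issue entirely.
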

	\begin{proof}
		Follows from the deduction that,
		\begin{center}
			$\lim_{s\rightarrow 1}(s-1).\zeta(s)=1$.
		\end{center}
	\end{proof}
	\begin{prop}
		For $s=1$, $\zeta(s)$ is the \textit{harmonic series} that \textit{diverges} to $+\infty$.
	\end{prop}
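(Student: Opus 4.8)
The plan is to proceed directly from the defining Dirichlet series. Setting $s=1$ in $\zeta(s)=\sum_{n=1}^{\infty} n^{-s}$ yields formally the harmonic series $\sum_{n=1}^{\infty}\frac{1}{n}$, so the entire content of the statement reduces to establishing that the harmonic series diverges to $+\infty$. First I would recall that the sequence of partial sums $H_N=\sum_{n=1}^{N}\frac{1}{n}$ is monotonically increasing, since every summand is positive; consequently it suffices to exhibit a lower bound for $H_N$ that tends to infinity with $N$.

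For that lower bound I would invoke the monotonicity of the map $x\mapsto\frac{1}{x}$ on $(0,\infty)$, which gives $\frac{1}{n}\geq\int_{n}^{n+1}\frac{dx}{x}$ for each $n\geq 1$. Summing over $n=1,\dots,N$ and using additivity of the integral then produces $H_N\geq\int_{1}^{N+1}\frac{dx}{x}=\log(N+1)$, and since $\log(N+1)\to+\infty$ as $N\to\infty$ the partial sums are unbounded, whence the series diverges to $+\infty$. An equally clean alternative is Oresme's dyadic grouping: collecting the terms into blocks $\left(\frac{1}{2^{k}+1}+\cdots+\frac{1}{2^{k+1}}\right)$ of $2^{k}$ summands each, every block is bounded below by $2^{k}\cdot\frac{1}{2^{k+1}}=\frac{1}{2}$, so that $H_{2^{m}}\geq 1+\frac{m}{2}\to+\infty$. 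Either route gives the conclusion with minimal machinery.

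Since this is a classical and elementary fact, there is no genuine analytic obstacle here; the only point requiring a moment's care is the justification of the integral comparison (or, in the grouping argument, the bookkeeping of the block endpoints), both of which are entirely routine. To close, I would remark that this divergence is exactly consistent with the meromorphic description established in the preceding proposition, namely that $\zeta(s)$ carries a simple pole at $s=1$, so that $\zeta(s)\to+\infty$ as $s\to 1^{+}$ along the real axis, which is precisely the behaviour the divergent harmonic series reflects.
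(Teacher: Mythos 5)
Your proof is correct, and it is worth noting that the paper itself offers no proof of this proposition at all: it is stated bare, immediately after the proposition on meromorphy (whose one-line proof invokes $\lim_{s\to 1}(s-1)\zeta(s)=1$). So your argument does not duplicate or diverge from anything in the paper; it supplies what the paper omits. Both of your routes are valid: the integral comparison $\frac{1}{n}\geq\int_{n}^{n+1}\frac{dx}{x}$ (which holds because $x\mapsto\frac{1}{x}$ is decreasing) gives the quantitative bound $H_N\geq\log(N+1)$, while Oresme's dyadic grouping gives $H_{2^m}\geq 1+\frac{m}{2}$ with no calculus at all; either one, combined with monotonicity of the partial sums, yields divergence to $+\infty$. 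The only point deserving a remark is the one you already flag with the word ``formally'': the series $\sum_{n\geq 1} n^{-s}$ defines $\zeta(s)$ only for $Re(s)>1$, so the proposition is properly read as a statement about the boundary series at $s=1$ rather than about a value of the analytically continued function (which has a pole there, not a value). Your closing observation that the divergence is consistent with the simple pole at $s=1$, i.e.\ that $\zeta(s)\to+\infty$ as $s\to 1^{+}$ along the real axis, ties your proof back to the preceding proposition in the paper and is a sensible way to integrate the two statements.
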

	\begin{prop}
		(\textbf{Basel Problem}) 
		\begin{center}
			$\zeta(2)=\sum\limits_{n=1}^{\infty}\frac{1}{n^{2}}=\frac{\pi^{2}}{6}$.
		\end{center}        
	\end{prop}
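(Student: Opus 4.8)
The plan is to evaluate $\zeta(2)$ by a contour-integration argument built on the Cauchy Residue Theorem (Theorem \ref{thm6}), which fits naturally with the complex-analytic machinery already assembled in this paper. The central device is the auxiliary function $f(z) = \dfrac{\pi \cot(\pi z)}{z^2}$. The key observation is that $\pi\cot(\pi z)$ is meromorphic with a simple pole of residue $1$ at every integer; consequently $f$ has a simple pole at each nonzero integer $n$ with residue $\frac{1}{n^2}$, and a triple pole at the origin. The whole strategy rests on comparing the sum of all residues of $f$ against the value of a shrinking contour integral.

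First I would record the two residue computations. At a nonzero integer $n$, since $\operatorname{Res}_{z=n}\pi\cot(\pi z) = 1$, one immediately obtains $\operatorname{Res}_{z=n} f = \frac{1}{n^2}$. At $z=0$ I would extract the Laurent expansion $\pi\cot(\pi z) = \frac{1}{z} - \frac{\pi^2}{3}z - \cdots$, so that $f(z) = \frac{1}{z^3} - \frac{\pi^2}{3z} - \cdots$ and hence $\operatorname{Res}_{z=0} f = -\frac{\pi^2}{3}$. Next I would integrate $f$ over the boundary $C_N$ of the square with vertices $\pm(N+\tfrac12)\pm(N+\tfrac12)i$, for positive integers $N$. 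Applying the Residue Theorem over $C_N$, and using that the contributions of $n$ and $-n$ coincide, gives
\[
\oint_{C_N} f(z)\,dz = 2\pi i\left(-\frac{\pi^2}{3} + 2\sum_{n=1}^{N}\frac{1}{n^2}\right).
\]
The crux is then to show the left-hand side vanishes as $N\to\infty$: on $C_N$ the factor $\cot(\pi z)$ is bounded by a constant independent of $N$, while $|z|^{-2}$ is of order $N^{-2}$ and the perimeter is of order $N$, so the integral is $O(N^{-1})\to 0$. Letting $N\to\infty$ and solving the resulting identity $0 = -\frac{\pi^2}{3} + 2\sum_{n=1}^{\infty}\frac{1}{n^2}$ yields $\zeta(2)=\frac{\pi^2}{6}$.

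The main obstacle I anticipate is the uniform bound on $|\cot(\pi z)|$ along the square contours: one must estimate it separately on the vertical edges, where the real part of $z$ is a half-integer, and on the horizontal edges, where the imaginary part of $z$ is large in magnitude, and verify in each case that the bound does not depend on $N$. This estimate, together with the careful Laurent computation at the origin, carries the analytic weight of the argument; the residue summation itself is routine. Should one prefer to sidestep the contour estimate entirely, an alternative route is \emph{Euler's} classical comparison of power-series coefficients in the product expansion
\[
\frac{\sin(\pi z)}{\pi z} = \prod_{n\ge 1}\left(1 - \frac{z^2}{n^2}\right),
\]
reading off the coefficient of $z^2$ on both sides; but the residue approach above is more in keeping with the methods developed throughout this paper.
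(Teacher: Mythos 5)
Your proposal is correct, but there is nothing in the paper to compare it against: the paper states the Basel identity as a bare proposition with no proof at all, invoking it as a classical fact in its survey of properties of $\zeta(s)$ (and indeed the identity plays no role in the later proof of the Prime Number Theorem). Your contour-integration argument is the standard residue-theorem proof, and its steps check out: the residue $\frac{1}{n^{2}}$ of $\frac{\pi\cot(\pi z)}{z^{2}}$ at each nonzero integer $n$, the residue $-\frac{\pi^{2}}{3}$ at the origin read off from the Laurent expansion $\pi\cot(\pi z)=\frac{1}{z}-\frac{\pi^{2}}{3}z-\cdots$, the uniform bound on $|\cot(\pi z)|$ over the squares with half-integer vertices (on the vertical edges one has $\cot(\pi z)=-i\tanh(\pi\,\mathrm{Im}\,z)$, of modulus at most $1$, while on the horizontal edges $|\cot(\pi z)|\leq\coth\left(\frac{\pi}{2}\right)$), and the resulting $O\!\left(\frac{1}{N}\right)$ decay of the integral over $C_{N}$, which forces $0=-\frac{\pi^{2}}{3}+2\sum_{n=1}^{\infty}\frac{1}{n^{2}}$. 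What your route buys is a self-contained justification entirely within the complex-analytic toolkit the paper assembles (it is a direct application of the Cauchy Residue Theorem, Theorem \ref{thm6}); what the paper's treatment buys is brevity, at the cost of leaving the proposition unproved. Your fallback via Euler's product for $\frac{\sin(\pi z)}{\pi z}$ would also work, though it requires justifying the infinite product expansion, which the paper does not develop.
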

	\begin{prop}
		(\textbf{Trivial Zero-Free Region}) The Riemann Zeta Function $\zeta(s)$ has no zeroes in the region , $\{s\in \mathbb{C}\space :\space Re(s)>1\}$ .
	\end{prop}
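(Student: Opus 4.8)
The plan is to exploit the \emph{Euler product} representation of $\zeta(s)$, which is valid precisely on the half-plane $Re(s)>1$ under consideration. First I would recall that, for $Re(s)>1$, the defining series converges absolutely, so that the \textit{Fundamental Theorem of Arithmetic} (unique factorisation of integers into primes) permits reorganising the sum over $n$ as a product over the primes $p$:
\begin{align*}
\zeta(s)=\sum_{n=1}^{\infty}\frac{1}{n^{s}}=\prod_{p}\left(\sum_{m=0}^{\infty}\frac{1}{p^{ms}}\right)=\prod_{p}\frac{1}{1-p^{-s}},
\end{align*}
each inner geometric series converging because $|p^{-s}|=p^{-Re(s)}<1$, and the rearrangement being legitimate by absolute convergence.

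Next I would observe that every individual factor $\tfrac{1}{1-p^{-s}}$ is a nonzero complex number, since $1-p^{-s}\neq 0$ whenever $|p^{-s}|<1$. The substantive point is then to pass from ``each factor nonzero'' to ``the infinite product is nonzero.'' To this end I would take logarithms, writing
\begin{align*}
\log\zeta(s)=-\sum_{p}\log\left(1-p^{-s}\right)=\sum_{p}\sum_{m=1}^{\infty}\frac{p^{-ms}}{m},
\end{align*}
and then bound this double series in absolute value, with $\sigma=Re(s)>1$, by $\sum_{p}\sum_{m\ge 1}p^{-m\sigma}\le\sum_{n\ge 2}\frac{n^{-\sigma}}{1-n^{-\sigma}}<\infty$, which establishes its absolute convergence. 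Consequently $\log\zeta(s)$ is a finite complex quantity and $\zeta(s)=\exp\!\left(\log\zeta(s)\right)\neq 0$. Equivalently, one may record the direct estimate $|\zeta(s)|=\prod_{p}|1-p^{-s}|^{-1}\ge\prod_{p}(1+p^{-\sigma})^{-1}>0$, the final product being strictly positive because $\sum_{p}p^{-\sigma}<\infty$.

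The main obstacle I anticipate is exactly the rigorous justification that the infinite Euler product converges to a \emph{nonzero} limit, rather than merely possessing nonzero partial factors; in general an infinite product of nonzero terms can still tend to $0$. This subtlety is controlled entirely by the convergence of $\sum_{p}p^{-\sigma}$, so the technical heart of the argument is the standard fact that $\prod_{p}(1+a_{p})$ converges to a nonzero value whenever $\sum_{p}|a_{p}|<\infty$. Once this is secured, and since the point $s$ with $Re(s)>1$ was arbitrary, the proposition follows at once.
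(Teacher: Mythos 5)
Your proof is correct, but it cannot be compared against the paper's own argument for a simple reason: the paper states this proposition with no proof whatsoever. What you have written is the standard Euler-product argument, and it meshes exactly with the machinery the paper develops in the following subsection: the Euler Product Formula together with Lemma \ref{lemma4} (convergence of $\sum |a_{n}|$ implies convergence of $\prod (1+a_{n})$, where for infinite products ``convergent'' conventionally entails a nonzero limit), which is precisely the ``technical heart'' you isolate at the end. Two cautions are worth recording. First, your line $\log\zeta(s)=-\sum_{p}\log\left(1-p^{-s}\right)$ is logically backwards as written: defining $\log\zeta(s)$ presupposes $\zeta(s)\neq 0$, the very statement being proved. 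The repair is the one you yourself supply implicitly: define $G(s)=\sum_{p}\sum_{m\geq 1}\frac{p^{-ms}}{m}$, prove its absolute convergence for $\sigma>1$, show $e^{G(s)}$ equals the Euler product factor by factor (using $|p^{-s}|<1$ and the principal branch), and conclude $\zeta(s)=e^{G(s)}\neq 0$; alternatively, your closing estimate $|\zeta(s)|\geq\prod_{p}\left(1+p^{-\sigma}\right)^{-1}>0$ avoids logarithms entirely and is the cleanest route. Second, note that the paper's later representation $\zeta(s)=e^{G(s)}$ (Theorem \ref{thm12} and Corollary \ref{cor1}) cannot be used to prove this proposition, since Theorem \ref{thm12} assumes the nonvanishing of $F(s)$ as a hypothesis; an independent Euler-product argument such as yours is therefore not merely an alternative but the necessary way to fill the gap the paper leaves open.
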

	
	\subsection{Euler Product Representation of $\zeta(s)$}
	
	Deduced by famous mathematician \textit{Euler}, in the year $1737$, the following identity establishes a relation between the \textit{Riemann Zeta Function} and \textit{prime numbers}. The result is as follows :
	\begin{thm}
		(\textbf{Euler Product Formula})  We have, 
		\begin{center}
			$\zeta(s)=\sum\limits_{n=1}^{\infty}\frac{1}{n^{s}}=\prod\limits_{p=prime}\frac{1}{(1-{p^{-s}})}$
		\end{center}
		Where, the product on the R.H.S. is taken over all \textit{primes} $p$ and, converges for $Re(s)>1$.
	\end{thm}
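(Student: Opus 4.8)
The plan is to exploit the \emph{fundamental theorem of arithmetic} together with the absolute convergence of the defining Dirichlet series on the half-plane $Re(s)>1$. First I would observe that for $Re(s)>1$ one has $|p^{-s}|=p^{-Re(s)}<1$ for every prime $p$, so each Euler factor admits the geometric series expansion
\begin{align*}
\frac{1}{1-p^{-s}}=\sum_{k=0}^{\infty}p^{-ks}=1+p^{-s}+p^{-2s}+\cdots,
\end{align*}
which converges absolutely.

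Next, I would work with a finite truncation of the product. Fix $N$ and consider $\prod_{p\leq N}\frac{1}{1-p^{-s}}$, a finite product of the above absolutely convergent series. Multiplying them out and rearranging (permissible by absolute convergence), each resulting term is a product $p_{1}^{-k_{1}s}\cdots p_{r}^{-k_{r}s}=n^{-s}$, where $n=p_{1}^{k_{1}}\cdots p_{r}^{k_{r}}$ ranges over exactly those positive integers whose prime factors all lie among the primes $\leq N$. By \emph{unique factorization} each such $n$ is produced precisely once, so
\begin{align*}
\prod_{p\leq N}\frac{1}{1-p^{-s}}=\sum_{n\in S_{N}}\frac{1}{n^{s}},
\end{align*}
where $S_{N}$ denotes the set of positive integers all of whose prime divisors are at most $N$.

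Finally I would let $N\to\infty$. Since every integer $n\leq N$ certainly has all its prime factors $\leq N$, we have $\{1,2,\ldots,N\}\subseteq S_{N}$, and hence
\begin{align*}
\left\lvert \zeta(s)-\prod_{p\leq N}\frac{1}{1-p^{-s}}\right\rvert \leq \sum_{n>N}\frac{1}{n^{Re(s)}},
\end{align*}
which is the tail of the convergent series $\sum_{n} n^{-Re(s)}$ and therefore tends to $0$. This forces the partial products to converge to $\zeta(s)$, while the convergence of $\sum_{p} p^{-Re(s)}$ (dominated by $\sum_{n} n^{-Re(s)}$) guarantees via the standard criterion for infinite products that the product itself converges to a nonzero limit, establishing the identity.

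The main obstacle I anticipate is not any single computation but the careful justification of the rearrangement when expanding the finite product: one must invoke absolute convergence to treat the product of several infinite geometric series as a single absolutely convergent multiple series indexed by integers, and it is precisely here that unique factorization supplies a bijection between the index set and $S_{N}$ with no repetitions or omissions. Once this bookkeeping is secured, the tail estimate and the passage to the limit are routine.
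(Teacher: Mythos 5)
Your proposal is correct and follows essentially the same route as the paper: truncate the product over the primes $p\leq N$, identify the finite product with the sub-series of $\zeta(s)$ running over integers whose prime factors are all $\leq N$, let $N\to\infty$ using absolute convergence of $\sum_n n^{-s}$, and invoke the standard criterion (convergence of $\sum_p |p^{-s}|$) for convergence of the infinite product. If anything, your version is more careful at the decisive step: the paper merely asserts the expansion of the finite truncated product (writing it with a stray Dirichlet character $\chi$ that plays no role here, and never explicitly invoking unique factorization), whereas you justify that identity via the geometric series expansion of each factor and the fundamental theorem of arithmetic, followed by the tail estimate.
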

	\begin{proof}
		We shall state a lemma in support of establishing the above result.
		\begin{lemma}\label{lemma4}
			Suppose $\{a_{n}\}_{n\in \mathbb{N}}$ be a sequence of \textit{complex numbers} with $a_{n}\neq -1$,  $\forall$ $ n\in \mathbb{N}$. Then,
			\begin{center}
				$\sum\limits_{n=1}^{\infty}{|a_{n}|}$ is convergent,     implies,     $\prod\limits_{n=1}^{\infty}{(1+a_{n})}$  is convergent.
			\end{center}
		\end{lemma}
		Assume, $\sigma=Re(s)$. Then for natural numbers $N_{1}$ and $N_{2}$ with, $N_{1}<N_{2}$ (without loss of generality), 
		\begin{center}
			$|\sum\limits_{N_{1}+1}^{N_{2}}\frac{1}{n^{s}}|\leq |\sum\limits_{N_{1}+1}^{N_{2}}\frac{1}{n^{\sigma}}|$ \hspace{40pt}  (Using the result, $|n^{-\sigma}|=|n^{-s}|$)
		\end{center}
		A priori using the fact, 
		\begin{center}
			$n^{-\sigma}\leq \int\limits_{n-1}^{n}{x^{-\sigma}}dx$,
		\end{center}
		performing summation on both sides yield,
		\begin{center}
			$\sum\limits_{N_{1}+1}^{N_{2}}n^{-\sigma}\leq  \int\limits_{N_{1}}^{N_{2}}{x^{-\sigma}}dx=\frac{1}{\sigma}(N_{1}^{-\sigma}-N_{2}^{-\sigma})$ $\longrightarrow 0$     as, $N_{1},N_{2}\rightarrow \infty$
		\end{center}
		Hence, $\sum\limits_{n=1}^{\infty}n^{-s}$ is convergent . (Applying \textit{Cauchy's Criterion} for convergence of series)
		
		Also,
		\begin{center} $\sum\limits_{p=prime}|p^{-s}|=\sum\limits_{p=prime}p^{-\sigma} \leq \sum\limits_{n=1}^{\infty}n^{-\sigma}$.
		\end{center}
		Hence, we conclude that, $\sum\limits_{p=prime}p^{-s}$ is \textit{absolutely convergent}.\par
		Applying Lemma \eqref{lemma4}, we obtain that, the product, $\prod\limits_{p=prime}(1-p^{-s})$ is absolutely convergent, hence convergent.
		Thus,
		\begin{center}
			$\prod\limits_{p=prime}\frac{1}{(1-p^{-s})}$ is also convergent.
		\end{center}
		To prove that, both sides of the given identity in this theorem are equal, we observe that,
		\begin{center}
			$\prod\limits_{p\leq N_{2}}\frac{1}{(1-\chi(p)p^{-s})}=\sum\limits_{n> N_{2}}\chi(n)n^{-s}+\sum\limits_{n\leq N_{2}}\chi(n)n^{-s}$
		\end{center}
		As, $N_{2}\longrightarrow \infty$,  
		\begin{center}
			R.H.S.    $=\sum\limits_{n\in \mathbb{N}}n^{-s}$ \hspace{40pt} (The sum being \textit{absolutely convergent})
		\end{center}
		Therefore, the product on the L.H.S. tends to $\prod\limits_{p=prime}\frac{1}{(1-p^{-s})}$, and hence, \textit{Euler's Formula} is established.
	\end{proof}
	\subsection{Riemann's Functional Equation for $\zeta(s)$}
	We define a particular form of \textit{Dirichlet Series} $F(x,s)$ as , 
	\begin{center}
		$	F(x,s)=\sum\limits_{n=1}^{\infty}\frac{{e}^{2\pi inx}}{{n}^{s}}$  , \hspace{10pt} where , $x\in \mathbb{R}$  \hspace{10pt} and ,\hspace{10pt} $Re(s)>1$ .
	\end{center}
	Important to observe that, $F(x,s)$ is a periodic function of $x$ ( also known as the \textit{periodic zeta function} ) with period $1$ \hspace{10pt} and , \hspace{10pt} $F(1,s)=\zeta(s)$ . 
	\begin{thm}
		(Hurwitz's Formula) We have,
		\begin{center}
			$\zeta(1-s,a)=\frac{\Gamma(s)}{{(2\pi)}^{s}}\{{e}^{-\pi is/2}F(a,s)+{e}^{\pi is/2}F(-a,s)\}$ .
		\end{center}
		For , $0<a\leq 1$  and , $Re(s)>1$ .
	\end{thm}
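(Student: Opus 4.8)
The plan is to prove Hurwitz's formula by the classical contour-integration method, with the \emph{Cauchy Residue Theorem} (Theorem \ref{thm6}) as the central engine. The first step is to record the loop-integral representation of the Hurwitz zeta function, valid for all $s\neq 1$,
\[
\zeta(s,a)=\frac{\Gamma(1-s)}{2\pi i}\int_{C}\frac{z^{s-1}e^{az}}{1-e^{z}}\,dz,
\]
where $C$ is a Hankel-type loop running in from $+\infty$ just above the positive real axis, encircling the origin once positively, and returning to $+\infty$ just below it, and $z^{s-1}=e^{(s-1)\log z}$ is taken with the branch of $\log z$ real on the upper edge of the cut. This is obtained by folding the real integral $\int_{0}^{\infty}x^{s-1}e^{-ax}(1-e^{-x})^{-1}\,dx$ (which represents $\Gamma(s)\zeta(s,a)$ for $Re(s)>1$, exactly as in the displayed integral formula for $\zeta(s)$ given earlier) onto the loop $C$; since the loop integral is entire in $s$, the identity persists for all $s\neq 1$ by analytic continuation. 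Replacing $s$ by $1-s$ converts $z^{s-1}$ into $z^{-s}$ and $\Gamma(1-s)$ into $\Gamma(s)$, giving
\[
\zeta(1-s,a)=\frac{\Gamma(s)}{2\pi i}\int_{C}\frac{z^{-s}e^{az}}{1-e^{z}}\,dz.
\]

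The core of the proof is to evaluate this remaining loop integral by enlarging the contour. I would introduce an outer circle $\Gamma_{N}$ of radius $R_{N}=(2N+1)\pi$ --- chosen to sit exactly halfway between consecutive poles --- and apply the \emph{Residue Theorem} to the region trapped between $\Gamma_{N}$ and $C$. The integrand $z^{-s}e^{az}(1-e^{z})^{-1}$ has simple poles precisely where $e^{z}=1$, that is at $z=2\pi i n$ with $n\in\mathbb{Z}\setminus\{0\}$; since the denominator has derivative $-1$ there, the residue at $z=2\pi i n$ equals $-(2\pi i n)^{-s}e^{2\pi i n a}$. Accounting for orientation, the Residue Theorem yields, once the outer contribution is discarded,
\[
\frac{1}{2\pi i}\int_{C}\frac{z^{-s}e^{az}}{1-e^{z}}\,dz=-\sum_{n\neq 0}\operatorname{Res}_{z=2\pi i n}.
\]

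The decisive analytic step, and the one I expect to be the main obstacle, is to show that $\int_{\Gamma_{N}}\to 0$ as $N\to\infty$. On $|z|=R_{N}$ we have $|z^{-s}|=O\!\left(R_{N}^{-Re(s)}\right)$, while the ratio $e^{az}(1-e^{z})^{-1}$ stays uniformly bounded on the circle: for $Re(z)\to+\infty$ it behaves like $-e^{(a-1)z}$, bounded precisely because $a\leq 1$; for $Re(z)\to-\infty$ it behaves like $e^{az}$, bounded precisely because $a>0$; and in the central strip the choice $R_{N}=(2N+1)\pi$ keeps $z$ at distance at least $\pi$ from every pole, so $(1-e^{z})^{-1}$ is controlled. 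This is exactly where the hypothesis $0<a\leq 1$ is consumed. Multiplying the $O(R_{N}^{-Re(s)})$ bound by the circumference $O(R_{N})$ gives $O\!\left(R_{N}^{1-Re(s)}\right)$, which tends to $0$ as soon as $Re(1-s)<0$, i.e. $Re(s)>1$ --- matching the stated hypothesis exactly.

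It remains to evaluate the residue sum and recognise the periodic zeta functions. Splitting into $n>0$ and $n<0$ and writing $2\pi i n=2\pi n\,e^{i\pi/2}$ for $n>0$ and $2\pi i n=2\pi|n|\,e^{-i\pi/2}$ for $n<0$ pulls out the factors $e^{-\pi i s/2}$ and $e^{\pi i s/2}$ from $(2\pi i n)^{-s}$, after which the remaining sums are $\sum_{m\geq 1}m^{-s}e^{2\pi i m a}=F(a,s)$ and $\sum_{m\geq 1}m^{-s}e^{-2\pi i m a}=F(-a,s)$. Hence
\[
\sum_{n\neq 0}\operatorname{Res}_{z=2\pi i n}=-(2\pi)^{-s}\left(e^{-\pi i s/2}F(a,s)+e^{\pi i s/2}F(-a,s)\right),
\]
so that $\frac{1}{2\pi i}\int_{C}=(2\pi)^{-s}\left(e^{-\pi i s/2}F(a,s)+e^{\pi i s/2}F(-a,s)\right)$. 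Multiplying by $\Gamma(s)$ delivers
\[
\zeta(1-s,a)=\frac{\Gamma(s)}{(2\pi)^{s}}\left(e^{-\pi i s/2}F(a,s)+e^{\pi i s/2}F(-a,s)\right),
\]
which is precisely the claimed identity; the absolute convergence of $F(\pm a,s)$ for $Re(s)>1$ is what justifies interchanging summation with the limit $N\to\infty$.
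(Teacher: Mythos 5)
The first thing to note is that the paper contains no proof of this theorem at all: Hurwitz's formula is simply stated (with a remark on its range of validity) and then used to derive the functional equation of $\zeta(s)$, the proof being implicitly deferred to Apostol [1]. So your proposal can only be compared with that standard argument, and in outline you have reproduced it faithfully: a Hankel-loop representation of the Hurwitz zeta function, an expanding circle of radius $(2N+1)\pi$, the Residue Theorem, residues at $z=2\pi i n$ assembling into $F(a,s)$ and $F(-a,s)$, with $0<a\le 1$ consumed in bounding $e^{az}/(1-e^{z})$ and $\Re(s)>1$ consumed in killing the outer circle. That is exactly the right skeleton.

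However, two of your details are wrong, and they are not cosmetic. First, for the integrand $z^{s-1}e^{az}/(1-e^{z})$ the loop must encircle the \emph{negative} real axis; that is precisely where the hypothesis $a>0$ produces decay. Around the \emph{positive} real axis, as you have it, the representation is false: on the positive axis $e^{ax}/(1-e^{x})=-e^{-(1-a)x}/(1-e^{-x})$, so the edge integrals produce $-\Gamma(s)\zeta(s,1-a)$, i.e.\ your loop represents a constant multiple of $\zeta(s,1-a)$, not $\zeta(s,a)$; worse, at $a=1$ --- exactly the case the paper needs, since the functional equation is obtained by setting $a=1$ --- the edge integrals diverge, because $e^{az}/(1-e^{z})\to -1$ as $\Re(z)\to+\infty$ and the boundedness you check suffices for the outer circle but not for integrability along the two infinite edges. (Folding the real integral $\int_0^\infty x^{s-1}e^{-ax}(1-e^{-x})^{-1}dx$ around the positive axis actually yields the integrand $(-z)^{s-1}e^{-az}/(1-e^{-z})$; the substitution $z\mapsto -z$ turns this into your integrand but simultaneously moves the loop to the negative axis, which is the source of the mix-up.) Second, your branch choice is incompatible with your residue evaluation: with $\log z$ real on the upper edge of a cut along the positive reals, i.e.\ $\arg z\in[0,2\pi)$, the poles $z=-2\pi i m$ ($m>0$) carry $\arg z=3\pi/2$, so $z^{-s}=(2\pi m)^{-s}e^{-3\pi i s/2}$ there, not the $(2\pi m)^{-s}e^{+\pi i s/2}$ you use, and the final constant comes out wrong. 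Both defects are repaired simultaneously by adopting the standard configuration: loop around the negative real axis with $\arg z\in[-\pi,\pi]$, so that the poles $\pm 2\pi i n$ carry $\arg z=\pm\pi/2$. With that correction, the remainder of your argument --- the uniform bound for $e^{az}/(1-e^{z})$ on $|z|=(2N+1)\pi$ for $0<a\le 1$, the $O\bigl(R_N^{1-\Re(s)}\bigr)$ estimate requiring $\Re(s)>1$, and the identification of the residue sums with $F(\pm a,s)$ --- goes through verbatim.
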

	\begin{rmk}
		For $a\neq 1$ , \textit{Hurwitz's Formula} is valid for $Re(s)>0$ .
	\end{rmk}
	Using \textit{Hurwitz's Formula} , we shall establish the functional equation for $\zeta(s)$ .	
	
	\begin{thm}
		The functional equation for the Riemann Zeta Function $\zeta(s)$ is given by ,
		\begin{center}
			$\zeta(1-s)=2{(2\pi)}^{-s}\Gamma(s)cos(\frac{\pi s}{2})\zeta(s)$
		\end{center}
		In other words ,
		\begin{center}
			$\zeta(s)=2{(2\pi)}^{s-1}\Gamma(1-s)cos(\frac{\pi s}{2})\zeta(1-s)$
		\end{center}
	\end{thm}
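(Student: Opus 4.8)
The plan is to obtain the functional equation as the specialization $a=1$ of Hurwitz's Formula, followed by an analytic continuation argument to extend the resulting identity to all of $\mathbb{C}$. First I would set $a=1$ in
\[
\zeta(1-s,a)=\frac{\Gamma(s)}{(2\pi)^{s}}\left\{e^{-\pi i s/2}F(a,s)+e^{\pi i s/2}F(-a,s)\right\}.
\]
The left-hand side collapses to the Riemann zeta function, since $\zeta(s,1)=\sum_{n=0}^{\infty}(n+1)^{-s}=\zeta(s)$ forces $\zeta(1-s,1)=\zeta(1-s)$. The content of the proof then reduces to evaluating the two periodic-zeta contributions $F(1,s)$ and $F(-1,s)$.

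For this evaluation I would exploit the fact, already recorded in the excerpt, that $F(x,s)$ is periodic in $x$ with period $1$ and satisfies $F(1,s)=\zeta(s)$. Periodicity gives $F(-1,s)=F(0,s)=\sum_{n=1}^{\infty}n^{-s}=\zeta(s)$ as well, so both exponential terms share the common factor $\zeta(s)$. Substituting and combining the exponentials through $e^{i\theta}+e^{-i\theta}=2\cos\theta$ yields
\[
\zeta(1-s)=\frac{\Gamma(s)}{(2\pi)^{s}}\,\zeta(s)\left(e^{-\pi i s/2}+e^{\pi i s/2}\right)=2(2\pi)^{-s}\Gamma(s)\cos\!\left(\tfrac{\pi s}{2}\right)\zeta(s),
\]
which is exactly the first displayed form of the functional equation.

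The step I expect to be the main obstacle is the justification of the range of validity, rather than the algebra. The evaluation $F(\pm 1,s)=\zeta(s)$ requires $\mathrm{Re}(s)>1$ for the defining Dirichlet series to converge, and the accompanying remark flags that Hurwitz's Formula at the endpoint $a=1$ is only immediately licensed on this half-plane; so the derivation above initially holds only for $\mathrm{Re}(s)>1$. To promote the identity to the whole complex plane I would invoke analytic continuation: both sides are meromorphic on $\mathbb{C}$ (the right-hand side via the established meromorphic continuations of $\Gamma$ and $\zeta$, the left via that of $\zeta$), and two meromorphic functions agreeing on the open set $\{\mathrm{Re}(s)>1\}$ coincide wherever both are defined by the identity theorem. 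Finally, the equivalent second form is read off by applying the reflection $s\mapsto 1-s$ to the identity just proved and rearranging the elementary factors.
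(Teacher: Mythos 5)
Your proposal is correct and follows essentially the same route as the paper: specializing Hurwitz's Formula at $a=1$, using $F(\pm 1,s)=\zeta(s)$, combining the exponentials into $2\cos(\pi s/2)$, and then replacing $s$ by $1-s$ for the second form. Your additional care in restricting the initial derivation to $\mathrm{Re}(s)>1$ and invoking the identity theorem to extend the identity is a refinement the paper omits, but it does not change the underlying argument.
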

	\begin{proof}
		Putting $a=1$ in \textit{Hurwitz's Formula} , 
		\begin{center}
			$\zeta(1-s)=\frac{\Gamma(s)}{{(2\pi)}^{s}}\{{e}^{-\pi is/2}\zeta(s)+{e}^{\pi is/2}\zeta(s)\}=\frac{\Gamma(s)}{{(2\pi)}^{s}}2cos(\frac{\pi s}{2})\zeta(s)$ .
		\end{center}
		Which proves the first part of the theorem . Replacing $s$ by $1-s$ , we establish the equivalent definition of the functional equation for $\zeta(s)$ .
	\end{proof}
	\begin{rmk}
		Putting , $s=2n+1$ ,\hspace{10pt} $\forall\hspace{5pt} n\in \mathbb{N}$ ,  we obtain the \textit{trivial zeroes} of $\zeta(s)$ , consequently,
		\begin{center}
			$\zeta(-2n)=0$ \hspace{20pt}   $\forall\hspace{5pt} n\in \mathbb{N}$ .
		\end{center}
	\end{rmk}
	
	\section{Order Estimates for specific functions of $\zeta(s)$}
	\subsection{A Contour Integral representation of $\frac{\psi_{1}(x)}{x^{2}}$}
	
	\begin{thm}\label{thm8}
		If $c>1$ and $x\geq 1$, then,
		\begin{eqnarray}\label{10}
			\frac{\psi_{1}(x)}{x^{2}}=\frac{1}{2\pi i}\int\limits_{c-\infty i}^{c+\infty i}\frac{x^{s-1}}{s(s+1)}( -\frac{\zeta'(s)}{\zeta(s)})	ds.
		\end{eqnarray}
	\end{thm}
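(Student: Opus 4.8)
The plan is to evaluate the right-hand side of \eqref{10} directly and show that it collapses to the finite sum defining $\psi_{1}(x)$. The first step is to insert the Dirichlet series for the logarithmic derivative of $\zeta(s)$. Starting from the Euler Product Formula, taking logarithms and differentiating termwise, one obtains for $Re(s)>1$ the absolutely convergent expansion
\begin{align*}
	-\frac{\zeta'(s)}{\zeta(s)} = \sum_{n=1}^{\infty}\frac{\Lambda(n)}{n^{s}},
\end{align*}
where $\Lambda(n)$ is the Mangoldt function of \eqref{1}. Substituting this into the contour integral, the integrand becomes $\frac{x^{s-1}}{s(s+1)}\sum_{n\geq 1}\Lambda(n)n^{-s}$, and after writing $x^{s-1}n^{-s}=x^{-1}(n/x)^{-s}$ the factor $x^{-1}$ can be pulled outside.

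The second step is to interchange summation and contour integration. Writing $s=c+it$ on the line of integration, we have $|x^{s-1}|=x^{c-1}$, $|n^{s}|=n^{c}$, and $|s(s+1)|$ grows like $t^{2}$ as $|t|\to\infty$, so that $\int_{-\infty}^{\infty}|s(s+1)|^{-1}\,dt$ converges; combined with the convergence of $\sum_{n}\Lambda(n)n^{-c}$ for $c>1$, the resulting double integral-sum is absolutely convergent, and Fubini's theorem legitimizes the exchange. This gives
\begin{align*}
	\frac{1}{2\pi i}\int_{c-\infty i}^{c+\infty i}\frac{x^{s-1}}{s(s+1)}\Bigl(-\frac{\zeta'(s)}{\zeta(s)}\Bigr)ds = \frac{1}{x}\sum_{n=1}^{\infty}\Lambda(n)\cdot\frac{1}{2\pi i}\int_{c-\infty i}^{c+\infty i}\frac{(n/x)^{-s}}{s(s+1)}ds.
\end{align*}

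The third step is to evaluate each inner integral by the special case $k=1$ of Lemma \eqref{lemma3}, with $u=n/x$. That lemma gives the inner integral as $(1-n/x)$ when $0<n/x\leq 1$, i.e. when $n\leq x$, and as $0$ when $n/x>1$, i.e. when $n>x$. Hence only the finitely many terms with $n\leq x$ survive, and the right-hand side reduces to
\begin{align*}
	\frac{1}{x}\sum_{n\leq x}\Lambda(n)\Bigl(1-\frac{n}{x}\Bigr) = \frac{1}{x^{2}}\sum_{n\leq x}(x-n)\Lambda(n) = \frac{\psi_{1}(x)}{x^{2}},
\end{align*}
where the last equality is the representation of $\psi_{1}(x)$ obtained from Lemma \eqref{lemma1} with $a(n)=\Lambda(n)$ (as recorded in Theorem \eqref{thm7}). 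This establishes \eqref{10}.

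I expect the main obstacle to be the rigorous justification of the interchange in the second step: one must verify absolute convergence strongly enough to apply Fubini, which hinges precisely on the quadratic decay of $1/|s(s+1)|$ along the vertical line $Re(s)=c$ together with the convergence of the Dirichlet series at $c>1$. The subsequent evaluation through Lemma \eqref{lemma3} is then essentially mechanical, though one should note that at the boundary value $u=1$ (which occurs exactly when $x$ is an integer and $n=x$) the lemma contributes $(1-u)=0$, consistent with the vanishing term $(x-n)=0$ in the sum defining $\psi_{1}(x)$, so no ambiguity arises there.
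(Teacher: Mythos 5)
Your proposal is correct and is essentially the paper's own proof run in reverse: the paper starts from $\frac{\psi_{1}(x)}{x}=\sum_{n\leq x}\left(1-\frac{n}{x}\right)\Lambda(n)$, converts each term into a contour integral via Lemma \eqref{lemma3} with $k=1$, and then interchanges sum and integral (justified by the same absolute-convergence bound you give) to recognize the Dirichlet series $-\frac{\zeta'(s)}{\zeta(s)}=\sum_{n}\Lambda(n)n^{-s}$, whereas you start from the integral, insert that Dirichlet series, apply Fubini, and collapse via the same lemma. Since the key lemma, the Dirichlet series identity, and the interchange justification are identical, this is the same approach and your argument is complete.
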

	\begin{proof}
		A priori from \eqref{9},
		\begin{center}
			$\frac{\psi_{1}(x)}{x}=\sum\limits_{n\leq x} (1-\frac{n}{x})\Lambda(n)$.
		\end{center}
		Apply Lemma \eqref{lemma3} with putting $k=1$, and $u=\frac{n}{x}$. In case when $n\leq x $, we obtain,
		\begin{align} \label{25}
			(1-\frac{n}{x})=\frac{1}{2\pi i}\int\limits_{c-\infty i}^{c+\infty i}\frac{(\frac{x}{n})^{s}}{s(s+1)}ds.
		\end{align}
		Multiply \eqref{25} by $\Lambda(n)$ and summing over all $n\leq x$,
		\begin{center}
			$\frac{\psi_{1}(x)}{x}=\sum\limits_{n\leq x}\frac{1}{2\pi i}\int\limits_{c-\infty i}^{c+\infty i}\frac{\Lambda(n)(\frac{x}{n})^{s}}{s(s+1)}ds=\sum\limits_{n=1}^{\infty}\frac{1}{2\pi i}\int\limits_{c-\infty i}^{c+\infty i}\frac{\Lambda(n)(\frac{x}{n})^{s}}{s(s+1)}ds$.
		\end{center}	
		Since the integral above vanishes if, $n>x$, we can derive that,
		\begin{eqnarray}\label{11}
			\frac{\psi_{1}(x)}{x}=\sum\limits_{n=1}^{\infty}\int\limits_{c-\infty i}^{c+\infty i}f_{n}(s)ds,\hspace{20pt}
			&\mbox{ where,} \hspace{10pt} 2\pi if_{n}(x)=\Lambda(n)\frac{(\frac{x}{n})^{s}}{s(s+1)}.
		\end{eqnarray}
		Suppose, we wish to interchange the sum and the integral in \eqref{11}. For this it suffices to prove that, the series,
		\begin{eqnarray}\label{12}
			\sum\limits_{n=1}^{\infty}\int\limits_{c-\infty i}^{c+\infty i}|f_{n}(s)|ds
		\end{eqnarray}
		is convergent. Important to observe that, the partial sum of the series given in \eqref{12} saitsfy the inequality,
		\begin{center}
			$	\sum\limits_{n=1}^{N}\int\limits_{c-\infty i}^{c+\infty i}\frac{\Lambda(n)(\frac{x}{n})^{c}}{|s||s+1|}ds=\sum\limits_{n=1}^{N}\frac{\Lambda(n)}{n^{c}}\int\limits_{c-\infty i}^{c+\infty i}\frac{x^{c}}{|s||s+1|}ds\leq A\sum\limits_{n=1}^{\infty}\frac{\Lambda(n)}{n^{c}}$,
		\end{center}	
		Where, $A$ is a constant. Therefore, we can conclude that, \eqref{12} converges. Therefore, we can interchange the sum and the integral in \eqref{11} to obtain,
		\begin{center}
			$\frac{\psi_{1}(x)}{x}=\int\limits_{c-\infty i}^{c+\infty i}\sum\limits_{n=1}^{\infty}f_{n}(s)ds=\frac{1}{2\pi i}\int\limits_{c-\infty i}^{c+\infty i}\frac{x^{s}}{s(s+1)}\sum\limits_{n=1}^{\infty}\frac{\Lambda(n)}{n^{s}}ds$
		\end{center}	
		\begin{center}
			$=\frac{1}{2\pi i}\int\limits_{c-\infty i}^{c+\infty i}\frac{x^{s}}{s(s+1)}\left( -\frac{\zeta'(s)}{\zeta(s)}\right)	ds$.
		\end{center}	
		Now, we divide both sides of the above identity to obtain the desired result.
	\end{proof}
	\begin{thm}\label{thm9}
		If $c>1$ and $x\geq 1$ we have,
		\begin{eqnarray}\label{13}
			\frac{\psi_{1}(x)}{x^{2}}-\frac{1}{2}\left(1-\frac{1}{x}\right)^{2}=\frac{1}{2\pi i}\int\limits_{c-\infty i}^{c+\infty i}x^{s-1}h(s)ds
		\end{eqnarray}
		where,
		\begin{eqnarray}\label{14}
			h(s)=\frac{1}{s(s+1)}\left(-\frac{\zeta'(s)}{\zeta(s)}-\frac{1}{s-1}\right).
		\end{eqnarray}
	\end{thm}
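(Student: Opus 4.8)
The plan is to derive the statement directly from Theorem \eqref{thm8} by recognising the subtracted term $\frac{1}{2}\left(1-\frac{1}{x}\right)^{2}$ as itself a contour integral of the same type, and then merging the two integrands into one. The first step is to establish the auxiliary identity
\[
\frac{1}{2}\left(1-\frac{1}{x}\right)^{2}=\frac{1}{2\pi i}\int\limits_{c-\infty i}^{c+\infty i}\frac{x^{s-1}}{s(s+1)(s-1)}\,ds,
\]
valid for $c>1$ and $x\geq 1$. I recognise the right-hand side as the $k=2$ instance of Lemma \eqref{lemma3} after a change of variable.

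To make this precise, I would substitute $z=s-1$, so that $s=z+1$, $s+1=z+2$, $s-1=z$ and $ds=dz$, while the vertical contour $\mathrm{Re}(s)=c$ is translated to $\mathrm{Re}(z)=c-1$. Under this substitution the integrand becomes
\[
\frac{x^{s-1}}{s(s+1)(s-1)}=\frac{x^{z}}{z(z+1)(z+2)},
\]
and writing $x^{z}=(1/x)^{-z}$ puts the integral into exactly the shape of Lemma \eqref{lemma3} with $k=2$ and $u=1/x$. The hypothesis $c>1$ guarantees that the shifted abscissa $c-1$ is positive, meeting the requirement $c>0$ of the lemma, while $x\geq 1$ guarantees $0<u=1/x\leq 1$. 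The lemma then returns the value $\frac{1}{2!}\left(1-\frac{1}{x}\right)^{2}$, which is the auxiliary identity.

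With this in hand, I would subtract the auxiliary identity from the identity of Theorem \eqref{thm8}. Both integrals run over the same line $\mathrm{Re}(s)=c$ and both are absolutely convergent --- the first by the estimate already supplied in the proof of Theorem \eqref{thm8}, and the second by the absolute convergence asserted in Lemma \eqref{lemma3} --- so their difference can be written as a single integral whose integrand is
\[
\frac{x^{s-1}}{s(s+1)}\left(-\frac{\zeta'(s)}{\zeta(s)}\right)-\frac{x^{s-1}}{s(s+1)(s-1)}.
\]
Factoring out $\frac{x^{s-1}}{s(s+1)}$ collapses this to $x^{s-1}h(s)$ with $h(s)$ exactly as in \eqref{14}, which completes the proof.

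I do not expect a genuine obstacle here; the work is essentially bookkeeping, and the two constraints $c>1$ and $x\geq 1$ are exactly what the change of variable and Lemma \eqref{lemma3} demand. The one point worth emphasising is the design behind subtracting $\frac{1}{s-1}$: this term cancels the simple pole that $-\zeta'(s)/\zeta(s)$ inherits from the pole of $\zeta(s)$ at $s=1$, so that $h(s)$ becomes regular there. Although that regularity is irrelevant to the present algebraic manipulation, I would flag it explicitly, since it is precisely what will later permit the contour in \eqref{13} to be shifted across the line $\mathrm{Re}(s)=1$.
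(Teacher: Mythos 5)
Your proposal is correct and follows essentially the same route as the paper: the paper likewise invokes Lemma \eqref{lemma3} with $k=2$ and $u=\frac{1}{x}$, performs the shift $s \mapsto s-1$ (your substitution $z=s-1$ read in the opposite direction, with the same observation that $c>1$ keeps the shifted abscissa positive), and subtracts the resulting identity from Theorem \eqref{thm8}. Your closing remark on why the $\frac{1}{s-1}$ term is designed to cancel the pole of $-\frac{\zeta'(s)}{\zeta(s)}$ at $s=1$ is a useful addition, but it does not change the substance of the argument.
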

	\begin{proof}
		We use Lemma \eqref{lemma3} with $k=2$ to get,
		\begin{center}
			$\frac{1}{2}(1-\frac{1}{x})^{2}=\frac{1}{2\pi i}\int\limits_{c-\infty i}^{c+\infty i}\frac{x^{s}}{s(s+1)(s+2)}ds$,\hspace{20pt} where, $c>0$.
		\end{center}
		Replacing $s$ by $s-1$ in the integral ( keeping $c>1$ ),
		\begin{center}
			$\frac{1}{2}(1-\frac{1}{x})^{2}=\frac{1}{2\pi i}\int\limits_{c-\infty i}^{c+\infty i}\frac{x^{s-1}}{(s-1)(s)(s+1)}ds$,
		\end{center}
		Subtracting the above identity from the identity in Theorem \eqref{thm8}, we get our desired result.
	\end{proof}
	\begin{rmk}\label{rmk2}
		If we parametrize the path of integration by writing $s=c+it$, we obtain, $x^{s-1}=x^{c-1}x^{it}=x^{c-1}e^{it\log x}$. As a result, equation \eqref{13} becomes,
		\begin{eqnarray}\label{15}
			\frac{\psi_{1}(x)}{x^{2}}-\frac{1}{2}\left(1-\frac{1}{x}\right)^{2}=\frac{x^{c-1}}{2}\int\limits_{c-\infty i}^{c+\infty i}h(c+it)e^{itlog(x)}dt .
		\end{eqnarray}
	\end{rmk} 
	Our next task is to show that the R.H.S. of the identity \eqref{15} $\rightarrow 0$ as $x\rightarrow \infty$. As mentioned earlier, we first have to establish that, we can put $c=1$ in \eqref{15}. For this purpose, we need to study $\zeta(s)$ in the neighbourhood of the line, $\sigma =1$ ( taking, $s=\sigma + it$ in complex plane ).
	\subsection{Upper Bounds for $|\zeta(s)|$ and $|\zeta'(s)|$ near the line $\sigma =1$}
	
	In order to study $\zeta(s)$ near the line, $\sigma =1$, we use a particular representation of $\zeta(s)$ obtained from the following result.
	\begin{thm}\label{thm10}
		For any integer $N\geq 0$ and $\sigma >0$,
		\begin{eqnarray}\label{16}
			\zeta(s)=\sum\limits_{n=0}^{N}\frac{1}{n^{s}}+\frac{N^{1-s}}{s-1}-s\int\limits_{N}^{\infty}\frac{x-[x]}{x^{s+1}}	dx
		\end{eqnarray}
	\end{thm}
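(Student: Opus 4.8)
The plan is to establish the representation first on the half-plane $\sigma>1$, where the Dirichlet series definition of $\zeta(s)$ is valid, and then to extend it to the larger region $\sigma>0$ by analytic continuation. I read the displayed sum as $\sum_{n=1}^{N}n^{-s}$, since $0^{-s}$ is undefined for the excluded index $n=0$. For $\sigma>1$ I would split
\begin{align*}
	\zeta(s)=\sum_{n=1}^{N}\frac{1}{n^{s}}+\sum_{n=N+1}^{\infty}\frac{1}{n^{s}},
\end{align*}
so that the entire problem reduces to evaluating the tail sum in closed form.

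To treat the tail I would apply Abel's identity \eqref{5} with the constant arithmetic function $a(n)=1$, so that $A(t)=[t]$, and with $f(t)=t^{-s}$, $f'(t)=-s\,t^{-s-1}$, over the interval from $N$ to a variable upper limit $x$. This produces a boundary contribution $[x]x^{-s}-N^{1-s}$ together with the integral $s\int_{N}^{x}[t]\,t^{-s-1}\,dt$. The decisive manipulation is to write $[t]=t-(t-[t])$, which separates this integral into a smooth piece $s\int_{N}^{x}t^{-s}\,dt$, integrable explicitly to $\frac{s}{1-s}\bigl(x^{1-s}-N^{1-s}\bigr)$, and a remainder $-s\int_{N}^{x}\frac{t-[t]}{t^{s+1}}\,dt$. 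Letting $x\to\infty$ and using $\sigma>1$ to annihilate the boundary and smooth terms at infinity, the surviving pieces collapse, after combining the coefficients of $N^{1-s}$, to $\frac{N^{1-s}}{s-1}-s\int_{N}^{\infty}\frac{x-[x]}{x^{s+1}}\,dx$. Adding back the first $N$ terms yields the asserted formula \eqref{16} for $\sigma>1$.

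The final step is the analytic continuation. Since $0\le x-[x]<1$, the integral $\int_{N}^{\infty}\frac{x-[x]}{x^{s+1}}\,dx$ is dominated by $\int_{N}^{\infty}x^{-\sigma-1}\,dx$, which converges for every $\sigma>0$, and this bound is locally uniform in $s$ on compact subsets of $\{\sigma>0\}$. Consequently the right-hand side of \eqref{16} defines a holomorphic function on $\{\sigma>0\}\setminus\{1\}$, the only singularity being the simple pole of $N^{1-s}/(s-1)$ at $s=1$. As this function coincides with $\zeta(s)$ on the smaller region $\sigma>1$, the identity theorem forces the equality to persist throughout $\sigma>0$, which is exactly the claim.

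The algebra of the Abel summation and the elementary estimate of the remainder integral are routine; the step demanding the most care is the verification that the remainder integral is genuinely holomorphic in $s$ on $\sigma>0$. I would secure this either by differentiating under the integral sign, justified by the locally uniform convergence noted above, or more cleanly by Morera's theorem together with Fubini to interchange a contour integral over an arbitrary triangle with the integration in $x$. This holomorphy is precisely what licenses the appeal to analytic continuation and is the true crux of pushing $\zeta(s)$ past the line $\sigma=1$.
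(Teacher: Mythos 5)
Your proposal is correct and takes essentially the same route as the paper: both establish the identity for $\sigma>1$ by summation by parts on the tail $\sum_{n>N}n^{-s}$ (you via Abel's identity \eqref{5} with $a(n)=1$, the paper via Euler's summation formula, which is the same computation), split off $t-[t]$, let the upper limit tend to infinity, and then extend to $\sigma>0$ by the locally uniform convergence of the remainder integral and analytic continuation. Your extra care in justifying holomorphy of the integral (Morera/Fubini or differentiation under the integral sign) tightens the paper's brief remark that uniform convergence for $\sigma\geq\delta$ yields analyticity, and your reading of the sum as $\sum_{n=1}^{N}$ correctly repairs the paper's typographical $n=0$ lower limit.
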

	\begin{proof}
		Apply \textit{Euler's Summation Formula} with $f(t)=\frac{1}{t^{s}}$ and with integers $x$ and $y$ to obtain,
		\begin{center}
			$\sum\limits_{y<n\leq x}\frac{1}{n^{s}}=\int\limits_{y}^{x}\frac{dt}{t^{s}}-s\int\limits_{y}^{x}\frac{t-[t]}{t^{s+1}}	dt$
		\end{center}
		Taking $y=N$ and, letting $x\rightarrow \infty$, keeping $\sigma >1$, this yields,
		\begin{center}
			$\sum\limits_{n=N+1}^{\infty}\frac{1}{n^{s}}=\int\limits_{N}^{\infty}\frac{dt}{t^{s}}-s\int\limits_{N}^{\infty}\frac{t-[t]}{t^{s+1}}	dt$
		\end{center}
		
		or,
		\begin{center}
			$\zeta(s)-\sum\limits_{n=0}^{N}\frac{1}{n^{s}}=\frac{N^{1-s}}{s-1}-s\int\limits_{N}^{\infty}\frac{x-[x]}{x^{s+1}}	dx$.
		\end{center}	
		This proves \eqref{16} for $\sigma >1$. For the case when $0<\delta\leq \sigma$, the integral is dominated by, $\int\limits_{N}^{\infty}\frac{1}{t^{\delta +1}}	dt$,  so it converges uniformly for $\sigma \geq\delta$ and hence represents an analytic function in the half-plane \textbf{$\sigma>0$}. Therefore, \eqref{16} holds for $\sigma >0$ by analytic continuation.
	\end{proof}
	\begin{rmk}\label{rmk3}
		We also deduce the formula of $\zeta'(s)$ obtained by differentiating each member of \eqref{16},
		\begin{align*}
			\hspace{20pt} \zeta'(s)=-\sum\limits_{n=1}^{N}\frac{\log n}{n^{s}}+s\int\limits_{N}^{\infty}\frac{(x-[x])\log x}{x^{s+1}}dx-\int\limits_{N}^{\infty}\frac{(x-[x])}{x^{s+1}}dx 
		\end{align*}
		\begin{align}\label{17}
			\hspace{220pt}-\frac{N^{1-s}\log N}{s-1}-\frac{N^{1-s}}{(s-1)^{2}} .
		\end{align}
	\end{rmk}
	The next theorem uses the relations derived to obtain an upper bound for both $|\zeta(s)|$ and $|\zeta'(s)|$.
	\begin{thm}\label{thm11}
		For every $A>0$, $\exists$ a constant $M$ ( depending on $A$ ) such that,
		\begin{eqnarray}\label{18}
			|\zeta(s)|\leq M\log t \hspace{20pt} &\mbox{ and,}\hspace{10pt}  |\zeta'(s)|\leq M\log^{2} t
		\end{eqnarray}
		$\forall$ $s$ with $\sigma \geq \frac{1}{2}$ satisfying,
		\begin{eqnarray}\label{19}
			\sigma>1-\frac{A}{\log t } \hspace{20pt} &\mbox{ and, }  t\geq e .
		\end{eqnarray}
	\end{thm}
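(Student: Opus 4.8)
The plan is to substitute the representations of Theorem \ref{thm10} (equation \eqref{16}) and Remark \ref{rmk3} (equation \eqref{17}) directly into the region prescribed by \eqref{19}, and then majorize each term, the whole estimate turning on a judicious choice of the truncation parameter $N$ as a function of $t$. First I would dispose of the trivial range $\sigma \geq 2$: there $|\zeta(s)| \leq \sum_{n=1}^{\infty} n^{-\sigma} \leq \zeta(2)$ and $|\zeta'(s)| \leq \sum_{n=1}^{\infty} (\log n)\, n^{-\sigma}$ are bounded by absolute constants, and since $\log t \geq 1$ for $t \geq e$, both inequalities of \eqref{18} hold automatically. The substantive work therefore lies in the strip $\tfrac12 \leq \sigma \leq 2$.

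The decisive idea is to take $N = [t]$ in \eqref{16}, so that $N \asymp t$, and to exploit the hypothesis $\sigma > 1 - A/\log t$ through the uniform bound $n^{1-\sigma} \leq e^{A}$, valid for every $1 \leq n \leq t$. Indeed, $n^{1-\sigma} = e^{(1-\sigma)\log n} \leq e^{(A/\log t)\log t} = e^{A}$ as soon as $\log n \leq \log t$. Applied to the finite sum this gives $|n^{-s}| = n^{-\sigma} = n^{-1}\, n^{1-\sigma} \leq e^{A} n^{-1}$, whence $\bigl|\sum_{n=1}^{N} n^{-s}\bigr| \leq e^{A}\sum_{n=1}^{N} n^{-1} \leq e^{A}(1 + \log N) = O(\log t)$, which is the source of the $\log t$ in the first estimate.

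It then remains to verify that the two non-sum terms of \eqref{16} contribute only $O(1)$. For $N^{1-s}/(s-1)$ I would use $|N^{1-s}| = N^{1-\sigma} \leq e^{A}$ together with $|s-1| = \sqrt{(\sigma-1)^2 + t^2} \geq t$, yielding a bound $e^{A}/t$. For the integral $s\int_{N}^{\infty}(x-[x])\,x^{-s-1}\,dx$ I would use $|x-[x]| \leq 1$ and $|s| \leq \sigma + t \leq 2 + t$, and compute $\int_{N}^{\infty} x^{-\sigma-1}\,dx = N^{-\sigma}/\sigma \leq 2e^{A}/N$; since $N = [t] \geq t-1$, the product stays bounded uniformly for $t \geq e$. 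Collecting the three estimates gives $|\zeta(s)| \leq M\log t$. The bound on $|\zeta'(s)|$ follows the identical template applied to \eqref{17}: the leading term is $\bigl|\sum_{n=1}^{N} (\log n)\, n^{-s}\bigr| \leq e^{A}\sum_{n=1}^{N} (\log n)/n = O(\log^{2} t)$, while the integral and boundary terms, controlled via $\int_{N}^{\infty} (\log x)\, x^{-\sigma-1}\,dx = N^{-\sigma}\bigl(\sigma^{-1}\log N + \sigma^{-2}\bigr)$ and the powers $|s-1|^{-1}, |s-1|^{-2}$, all remain $O(\log t)$ or smaller.

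The main obstacle — the one genuinely delicate step — is pinning the truncation at $N = [t]$ and checking that this single choice simultaneously keeps the finite sums of size $O(\log t)$ (respectively $O(\log^{2} t)$) and the tail integrals of size $O(1)$; the entire argument hinges on the interplay between $N \asymp t$ and the inequality $n^{1-\sigma} \leq e^{A}$, which is precisely where the hypothesis \eqref{19} is consumed. Everything after that is routine majorization of the terms appearing in \eqref{16} and \eqref{17}, with the final constant $M$ assembled from $e^{A}$ and the fixed bounds arising for $t \geq e$.
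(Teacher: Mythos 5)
Your proposal is correct and follows essentially the same route as the paper's proof: the trivial reduction for $\sigma \geq 2$, the truncation $N = [t]$ in \eqref{16} and \eqref{17}, the key inequality $n^{1-\sigma} \leq e^{A}$ extracted from \eqref{19} for $n \leq t$, and term-by-term majorization yielding $O(\log t)$ from the finite sum and $O(1)$ from the remaining terms. If anything, your treatment of the tail integral and of the $\zeta'$ boundary terms is spelled out more carefully than the paper's, which leaves those estimates largely implicit.
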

	
	\begin{proof}
		If $\sigma \geq 2$, we have, $|\zeta (s)|\leq \zeta(2)$ and $|\zeta'(s)|\leq |\zeta'(2)|$ . Hence, the inequalities in \eqref{18} are trivially satisfied. \\
		Therefore, we can assume, $\sigma<2$ and $t\geq e$. Consequently, we get,
		\begin{center}
			$|s|\leq \sigma +t\leq 2+t< 2t$ and    $|s-1|\geq t$.
		\end{center}
		Hence, $\frac{1}{|s-1|}\leq \frac{1}{t}$. Estimating $|\zeta (s)|$ by using \eqref{16} yields,
		\begin{center}
			$|\zeta(s)|\leq \sum\limits_{n=1}^{N}\frac{1}{n^{\sigma}}+ 2t\int\limits_{N}^{\infty}\frac{1}{x^{\sigma+1}}dx+ \frac{N^{1-\sigma}}{t}=\sum\limits_{n=1}^{N}\frac{1}{n^{\sigma}}+\frac{2t}{\sigma N^{\alpha}}+ \frac{N^{1-\sigma}}{t}$.
		\end{center}
		A priori we force $N$ to depend on $t$ by choosing, $N=[t]$. Thus, $N\leq t<N+1$ and $\log n\leq \log t$ if $n\leq N$.\\
		Observe that, \eqref{19} implies, $1-\sigma< \frac{A}{\log t}$. Consequently,
		\begin{center}
			$\frac{1}{n^{\sigma}}=\frac{n^{1-\sigma}}{n}=\frac{1}{n}e^{(1-\sigma)\log n}<\frac{1}{n}e^{A\frac{\log n}{\log t}}\leq \frac{1}{n}e^{A}=O(\frac{1}{n})$.
		\end{center}
		Therefore,
		\begin{center}
			$	\frac{2t}{\sigma N^{\sigma}}\leq \frac{N+1}{N}$ and,  $\frac{N^{1-\sigma}}{t}=\frac{N}{t}.\frac{1}{N^{\sigma}}=O(\frac{1}{N})=O(1)$,
		\end{center}	
		So,
		\begin{center}
			$|\zeta(s)|=O\left(\sum\limits_{n=1}^{N}\frac{1}{n}\right)+O(1)=O(log(N))+O(1)=O(log(t))$.
		\end{center}	
		This proves the inequality for $|\zeta(s)|$ in \eqref{18}.\\\\
		To obtain the expression for $|\zeta'(s)|$ we apply similar arguments on the equation \eqref{17}. It is significant to note that, an extra factor $\log N$ appears on R.H.S. Although the following estimate, $\log N=O(\log t)$ gives,   $|\zeta'(s)|=O(\log^{2} t)$ in the specified region.
		
	\end{proof}
	
	\subsection{The Non-Vanishing of $\zeta(s)$ on the line, $\sigma=1$}
	
	As evident from the title, in this section, we indeed wish to establish that, $\zeta(1+it)\neq 0$,   $\forall t\in \mathbb{R}$. The proof primarily focuses on an inequality.
	\begin{thm}\label{thm12}
		Given that the \textit{Dirichlet Series} $F(s)=\sum \frac{f(n)}{n^{s}}$ corresponding to an arithmetic function  $f(n)$ be absolutely convergent for $\sigma>\sigma_{a}$. We further assume, $f(1)\neq 0$. If $F(s)\neq 0$ for $\sigma>\sigma_{0}>\sigma_{a}$, then, for $\sigma>\sigma_{0}$, 
		\begin{center}
			$F(s)=e^{G(s)}$
		\end{center}
		with, 
		\begin{center}
			$G(s)=\log f(1)+\sum\limits_{n=2}^{\infty}\frac{(f'*f^{-1})(n)}{\log n}\frac{1}{n^{s}}$;
		\end{center}
		where, $f^{-1}$ is the \textit{Dirichlet Inverse }of $n$ and, $f'(n)=f(n)\log n$.
		
	\end{thm}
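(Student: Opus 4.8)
The plan is to realize $G$ as a holomorphic branch of $\log F$ on the half-plane $\sigma>\sigma_0$, to compute its derivative as a Dirichlet series through the logarithmic derivative $F'/F$, to integrate term by term, and finally to fix the constant of integration by letting $\sigma\to+\infty$. To begin, since $F(s)=\sum f(n)n^{-s}$ converges absolutely for $\sigma>\sigma_a$ it is holomorphic there, and by hypothesis $F$ is non-vanishing on the region $\sigma>\sigma_0$. As this half-plane is simply connected, a single-valued holomorphic logarithm $G(s)$ satisfying $e^{G(s)}=F(s)$ exists; the additive ambiguity of $2\pi i k$ will be pinned down at the end.

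Next I would compute the logarithmic derivative as a Dirichlet series. Term-by-term differentiation gives $F'(s)=-\sum_{n=1}^{\infty} f(n)(\log n)n^{-s}=-\sum_{n=1}^{\infty} f'(n)n^{-s}$ with $f'(n)=f(n)\log n$. Because $f(1)\neq 0$, the Dirichlet inverse $f^{-1}$ exists and $1/F(s)=\sum_{n=1}^{\infty} f^{-1}(n)n^{-s}$ in a half-plane of absolute convergence. Multiplying these two absolutely convergent Dirichlet series and invoking the correspondence between products of Dirichlet series and the Dirichlet convolution, I obtain
\[
\frac{F'(s)}{F(s)}=-\sum_{n=1}^{\infty}\frac{(f'*f^{-1})(n)}{n^{s}}.
\]
The $n=1$ term vanishes since $f'(1)=f(1)\log 1=0$, so the effective summation begins at $n=2$.

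Then I would integrate. Using $\frac{d}{ds}n^{-s}=-(\log n)n^{-s}$, and with the $n=1$ term absent, the series $\sum_{n=2}^{\infty}\frac{(f'*f^{-1})(n)}{\log n}n^{-s}$ has derivative precisely equal to $G'(s)=F'(s)/F(s)$. Hence $G(s)$ coincides with this series up to an additive constant $C$. To determine $C$, I would let $\sigma\to+\infty$ along the real axis: every term $n^{-s}$ with $n\geq 2$ tends to $0$, so the series tends to $0$, while $F(s)\to f(1)$, whence $G(s)\to\log f(1)$. This forces $C=\log f(1)$ and at the same time selects the branch of the logarithm, yielding exactly the stated formula and $F(s)=e^{G(s)}$.

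The main obstacle I anticipate is the rigorous justification of the term-by-term operations. Specifically, one must verify that the Dirichlet series for $f^{-1}$, and therefore for $f'*f^{-1}$, converges absolutely in the relevant region so that the convolution identity for $F'/F$ is legitimate, and that the resulting series may be differentiated and integrated termwise. These rest on the standard fact that an absolutely convergent Dirichlet series converges uniformly on compact subsets of its half-plane of absolute convergence; the delicate point is that the abscissa of absolute convergence of $\sum f^{-1}(n)n^{-s}$ may exceed $\sigma_0$, so one should first carry out the manipulations in the common half-plane where all series converge absolutely and then extend by analytic continuation. The remaining subtlety, fixing the correct branch of $\log F$, is handled cleanly by the limit $\sigma\to+\infty$.
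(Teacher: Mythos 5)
Your proposal is correct and follows essentially the same route as the paper's own proof: write $F=e^{G}$, identify $G'(s)=F'(s)/F(s)=-\sum_{n\geq 2}(f'*f^{-1})(n)n^{-s}$ via the Dirichlet series for $F'$ and $1/F$, integrate termwise, and fix the constant by letting $\sigma\to+\infty$ so that $C=\log f(1)$. Your additional care about simple connectivity (existence of the branch of $\log F$) and about the abscissa of absolute convergence of $\sum f^{-1}(n)n^{-s}$ only tightens points the paper leaves implicit.
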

	\begin{rmk}\label{rmk5}
		For complex $z\neq 0$, $\log z$ denotes that branch of the logarithm which is real when $z>0 $.
	\end{rmk} 
	\begin{proof}
		Since, $F(s)\neq 0$, we can write, 	$F(s)=e^{G(s)}$ for some function $G(s)$, which is analytic for $\sigma>\sigma_{0}$. Differentiation gives us,
		\begin{center}
			$F'(s)=e^{G(s)}G'(s)=F(s)G'(s)$.
		\end{center} 
		Therefore, we get,    $G'(s)=\frac{F'(s)}{F(s)}$. Although, we have,
		\begin{center}
			$F'(s)=-\sum\limits_{n=1}^{\infty}\frac{f(n)\log n}{n^{s}}=-\sum\limits_{n=1}^{\infty}\frac{f'(n)}{n^{s}}$  \hspace{20pt} and,          \hspace{10pt} $\frac{1}{F(s)}=\sum\limits_{n=1}^{\infty}\frac{f^{-1}(n)}{n^{s}}$
		\end{center}
		Hence,   $G'(s)=\frac{F'(s)}{F(s)}=-\sum\limits_{n=2}^{\infty}\frac{(f'*f^{-1})(n)}{n^{s}}$.\\\\
		\textit{Integration} gives,
		\begin{center}
			$G(s)=C+\sum\limits_{n=2}^{\infty}\frac{(f'*f^{-1})(n)}{n^{s}\log n}$
		\end{center}
		Where, $C$ is a constant. As, $\sigma\rightarrow +\infty$, we obtain,
		\begin{center}
			$\lim\limits_{\sigma\rightarrow +\infty}G(\sigma +it)=C$.
		\end{center}
		Therefore, 
		\begin{center}
			$f(1)=\lim\limits_{\sigma\rightarrow +\infty}F(\sigma +it)=e^{C}$.
		\end{center}
		Thus, $C=\log f(1)$ and this completes the proof . 
	\end{proof}
	\begin{rmk}
		It can also be deduced from the proof that, the series for $G(s)$ converges absolutely if $\sigma>\sigma_{0}$.
	\end{rmk}
	\begin{cor}\label{cor1}
		For the Riemann Zeta Function $\zeta(s)$, we have,
		\begin{center}
			$\zeta(s)=e^{G(s)}$, \hspace{20pt} for, \hspace{10pt}$\sigma>1$ 
		\end{center}
		where, 
		\begin{center}
			$G(s)=\sum\limits_{n=2}^{\infty}\frac{\Lambda(n)}{\log n }n^{-s}$.
		\end{center}
	\end{cor}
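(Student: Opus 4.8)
The plan is to apply Theorem~\ref{thm12} directly to the arithmetic function underlying $\zeta(s)$. First I would take $f(n)=1$ for every $n$, so that $F(s)=\sum_{n=1}^{\infty}n^{-s}=\zeta(s)$; this series is absolutely convergent for $\sigma>1$, giving abscissa $\sigma_{a}=1$, while $f(1)=1\neq 0$, so the standing hypotheses of Theorem~\ref{thm12} are met. The required non-vanishing is furnished by the \emph{Trivial Zero-Free Region} proposition, which guarantees $\zeta(s)\neq 0$ throughout $\sigma>1$. Since $\sigma_{a}=1$, I would fix an arbitrary $\sigma_{0}>1$; then $\zeta(s)\neq 0$ for $\sigma>\sigma_{0}>\sigma_{a}$ holds automatically, and Theorem~\ref{thm12} yields $\zeta(s)=e^{G(s)}$ on $\sigma>\sigma_{0}$. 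Letting $\sigma_{0}\to 1^{+}$ then extends the identity to the whole half-plane $\sigma>1$ (for any fixed $s$ with $\sigma>1$ one simply chooses $\sigma_{0}\in(1,\sigma)$).

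The substance of the argument lies in evaluating the coefficients of $G(s)$ explicitly. With $f\equiv 1$ I would compute the three ingredients appearing in Theorem~\ref{thm12}: the constant term is $\log f(1)=\log 1=0$; the twisted function is $f'(n)=f(n)\log n=\log n$; and the Dirichlet inverse $f^{-1}$ of the constant function is precisely the M\"{o}bius function $\mu$ of Definition~\ref{def4}, since $f*\mu$ equals the Dirichlet identity. Substituting these into $G(s)=\log f(1)+\sum_{n=2}^{\infty}\frac{(f'*f^{-1})(n)}{\log n}\,n^{-s}$ reduces the claim to identifying the convolution $(f'*f^{-1})(n)=(\log *\,\mu)(n)$ with the von Mangoldt function $\Lambda(n)$ of Definition~\ref{def2}.

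The key step, which I expect to be the crux of the computation, is therefore the classical convolution identity $\Lambda=\mu*\log$, equivalently $\sum_{d\mid n}\mu(n/d)\log d=\Lambda(n)$. I would derive it by M\"{o}bius inversion of the elementary relation $\log n=\sum_{d\mid n}\Lambda(d)$ (that is, $\log=\mathbf{1}*\Lambda$), which itself follows immediately from unique factorisation together with the definition of $\Lambda$. Once this identity is in hand the coefficients collapse to $\Lambda(n)/\log n$, and I arrive at $G(s)=\sum_{n=2}^{\infty}\frac{\Lambda(n)}{\log n}\,n^{-s}$, as required. No fresh analytic input is needed beyond Theorem~\ref{thm12}; the whole task is bookkeeping with Dirichlet convolutions, the single nontrivial ingredient being the inversion identity $\Lambda=\mu*\log$.
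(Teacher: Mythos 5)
Your proposal is correct and follows essentially the same route as the paper: apply Theorem~\ref{thm12} with $f\equiv 1$, identify $f'(n)=\log n$ and $f^{-1}=\mu$, and collapse the coefficients via the convolution identity $(\mu*\log)(n)=\Lambda(n)$. The only differences are matters of care rather than substance --- you explicitly verify the hypotheses (absolute convergence, $f(1)\neq 0$, non-vanishing for $\sigma>1$, the choice of $\sigma_{0}\to 1^{+}$) and derive $\Lambda=\mu*\log$ by M\"{o}bius inversion of $\log=\mathbf{1}*\Lambda$, steps the paper simply asserts.
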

	\begin{proof}
		Using $f(n)=1$, and applying Theorem \eqref{thm12}, we obtain, $f'(n)=\log n$ and $f^{-1}(n)=\mu(n)$,\\
		Therefore,
		\begin{center}
			$(f'*f^{-1})(n)=\sum\limits_{d|n}\log d\mu(\frac{n}{d})=\Lambda (n)$.
		\end{center}
		Hence, for $\sigma >1$, we obtain the result.
	\end{proof}	
	As a direct application of \textit{Corollary} \eqref{cor1}, we prove our main result in this section.	
	\begin{thm}\label{thm13}
		For $\sigma>1$, 
		\begin{eqnarray}\label{20}
			\zeta^{3}(\sigma)|\zeta(\sigma+it)|^{4}|\zeta(\sigma+2it)|\geq 1.
		\end{eqnarray}
	\end{thm}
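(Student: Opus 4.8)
The plan is to take logarithms and exploit the nonnegativity of the Mangoldt coefficients together with an elementary trigonometric inequality. By Corollary \ref{cor1}, for $\sigma > 1$ we have $\zeta(s) = e^{G(s)}$ with $G(s) = \sum_{n=2}^{\infty} \frac{\Lambda(n)}{\log n} n^{-s}$, the series converging absolutely. Taking the branch of the logarithm fixed in Remark \ref{rmk5}, this says $\log\zeta(s) = G(s)$, so that $\log|\zeta(s)| = \mathrm{Re}\,G(s)$. Writing $s = \sigma + it$ and using $n^{-s} = n^{-\sigma} e^{-it\log n}$, and then extracting real parts term by term (permissible by absolute convergence), I obtain
\begin{align*}
	\log|\zeta(\sigma+it)| = \sum_{n=2}^{\infty} \frac{\Lambda(n)}{\log n}\, n^{-\sigma}\cos(t\log n).
\end{align*}

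Next I would form the specific combination dictated by the exponents $3,4,1$. Setting $c_n := \frac{\Lambda(n)}{\log n}\,n^{-\sigma}$, which is nonnegative for every $n \geq 2$ since $\Lambda(n) \geq 0$ and $\log n > 0$, I compute
\begin{align*}
	3\log\zeta(\sigma) + 4\log|\zeta(\sigma+it)| + \log|\zeta(\sigma+2it)| = \sum_{n=2}^{\infty} c_n\bigl(3 + 4\cos(t\log n) + \cos(2t\log n)\bigr),
\end{align*}
where I have used that $\zeta(\sigma)$ is real and positive for $\sigma > 1$, so $\log|\zeta(\sigma)| = \log\zeta(\sigma)$, and that the cosine attached to $\zeta(\sigma+2it)$ carries the argument $2t\log n$.

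The crux is the elementary identity $3 + 4\cos\theta + \cos 2\theta = 2(1+\cos\theta)^2 \geq 0$, applied with $\theta = t\log n$. Combined with $c_n \geq 0$, this forces every summand to be nonnegative, whence the entire sum is nonnegative and
\begin{align*}
	3\log\zeta(\sigma) + 4\log|\zeta(\sigma+it)| + \log|\zeta(\sigma+2it)| \geq 0.
\end{align*}
Exponentiating then yields $\zeta^{3}(\sigma)|\zeta(\sigma+it)|^{4}|\zeta(\sigma+2it)| \geq 1$, which is the assertion.

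I expect the only genuinely delicate point to be the bookkeeping around the logarithm: one must confirm that $\mathrm{Re}(\log\zeta(s)) = \log|\zeta(s)|$ under the chosen branch and that real parts may be extracted termwise, both of which follow from the absolute convergence guaranteed by Corollary \ref{cor1}. The real content of the argument—the reason the weights $3,4,1$ are chosen rather than any others—is entirely encoded in the nonnegativity of $2(1+\cos\theta)^2$, so once that identity is in hand the inequality is immediate.
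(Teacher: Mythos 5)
Your proof is correct and is essentially the paper's own argument: both rest on Corollary \ref{cor1}, the nonnegativity of the coefficients $\frac{\Lambda(n)}{\log n}n^{-\sigma}$, and the identity $3+4\cos\theta+\cos 2\theta = 2(1+\cos\theta)^2 \geq 0$. The only (cosmetic) difference is that the paper rewrites $G(s)$ as a double sum over prime powers $\sum_p\sum_m \frac{1}{m}p^{-ms}$ and takes $\theta = mt\log p$, whereas you keep the single sum over $n$ with $\theta = t\log n$ --- the same series, merely indexed differently.
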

	\begin{proof}
		A priori from Corollary \eqref{cor1}, we have, for $\sigma>1$,
		\begin{center}
			$\zeta(s)=e^{G(s)}$
		\end{center}	
		where,
		\begin{center}
			$G(s)=\sum\limits_{n=2}^{\infty}\frac{\Lambda(n)}{log(n)n^{s}}=\sum\limits_{p}\sum\limits_{m=1}^{\infty}\frac{1}{mp^{ms}}$  .
		\end{center}	
		In other words,
		\begin{center}
			$\zeta(s)=e^{\{\sum\limits_{p}\sum\limits_{m=1}^{\infty}\frac{1}{mp^{ms}}\}}=e^{\{\sum\limits_{p}\sum\limits_{m=1}^{\infty}\frac{e^{-imtlog(p)}}{mp^{m\sigma}}\}}$.
		\end{center}	
		from which, we obtain,
		\begin{center}
			$|\zeta(s)|=e^{\{\sum\limits_{p}\sum\limits_{m=1}^{\infty}\frac{cos(mtlog(p))}{mp^{m\sigma}}\}}$.
		\end{center}	
		Successive application of this formula with $s=\sigma$, $s=\sigma+it$, $s=\sigma+2it$ implies,
		\begin{center}
			$\zeta^{3}(\sigma)|\zeta(\sigma+it)|^{4}|\zeta(\sigma+2it)|=e^{\{\sum\limits_{p}\sum\limits_{m=1}^{\infty}\frac{3+4cos(mtlog(p))+cos(2mtlog(p))}{mp^{m\sigma}}\}}$
		\end{center}
		Now the following trigonometric identity,
		\begin{center}
			$3+4cos\theta+cos2\theta=3+4cos\theta+4cos^{2}2\theta-1=2(1+cos\theta)^{2}\geq 0$.
		\end{center} 	
		helps us assert that, each term in the last infinite series is non-negative. Consequently, we obtain \eqref{20}.
	\end{proof}
	We thus conclude the following.
	\begin{thm}\label{thm14}
		$\zeta(1+it)\neq 0$ for every real $t$.
	\end{thm}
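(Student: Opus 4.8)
The plan is to argue by contradiction, exploiting the inequality \eqref{20} of Theorem \eqref{thm13} together with the behaviour of $\zeta(s)$ near its pole. Suppose, to the contrary, that $\zeta(1+it_{0})=0$ for some real $t_{0}$. First I would observe that necessarily $t_{0}\neq 0$, since $\zeta(s)$ has a simple pole (not a zero) at $s=1$; consequently both $1+it_{0}$ and $1+2it_{0}$ are points at which $\zeta$ is holomorphic, so $\zeta(1+2it_{0})$ is finite and a Taylor expansion of $\zeta$ about $1+it_{0}$ is available.

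The central step is to factor out the appropriate powers of $(\sigma-1)$ from the left-hand side of \eqref{20}. Using that $\zeta$ has a simple pole at $s=1$ with residue $1$, we have $(\sigma-1)\zeta(\sigma)\to 1$ as $\sigma\to 1^{+}$, and using the assumed simple zero at $1+it_{0}$ we have
\begin{align*}
\frac{\zeta(\sigma+it_{0})}{\sigma-1}\longrightarrow \zeta'(1+it_{0})
\end{align*}
as $\sigma\to 1^{+}$, a finite limit. Rewriting the product accordingly,
\begin{align*}
\zeta^{3}(\sigma)\,|\zeta(\sigma+it_{0})|^{4}\,|\zeta(\sigma+2it_{0})| = (\sigma-1)\,\bigl[(\sigma-1)\zeta(\sigma)\bigr]^{3}\left|\frac{\zeta(\sigma+it_{0})}{\sigma-1}\right|^{4}|\zeta(\sigma+2it_{0})|,
\end{align*}
the three factors other than $(\sigma-1)$ all converge to finite limits as $\sigma\to 1^{+}$, while the explicit factor $(\sigma-1)$ tends to $0$. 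Hence the whole product tends to $0$.

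This directly contradicts \eqref{20}, which asserts the product is $\geq 1$ for every $\sigma>1$; letting $\sigma\to 1^{+}$ would force $0\geq 1$. Therefore no such $t_{0}$ exists and $\zeta(1+it)\neq 0$ for all real $t$.

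The decisive point, and the only place where any subtlety enters, is the bookkeeping of the exponents: a simple zero at $1+it_{0}$ contributes $(\sigma-1)^{4}$ through the fourth power, the simple pole contributes $(\sigma-1)^{-3}$ through the cube, and these combine to leave a surviving factor $(\sigma-1)^{+1}$. It is precisely the $3$-$4$-$1$ weighting encoded in the identity $3+4\cos\theta+\cos 2\theta=2(1+\cos\theta)^{2}$ behind Theorem \eqref{thm13} that makes this net power strictly positive; any weaker zero-counting would fail to produce the vanishing factor. Accordingly, I expect the main care to go into justifying the order-exactly-one behaviour at $1+it_{0}$ (equivalently, that $\zeta(\sigma+it_{0})/(\sigma-1)$ has the finite limit $\zeta'(1+it_{0})$ claimed above, which rests on the holomorphy of $\zeta$ there), rather than into any heavy computation.
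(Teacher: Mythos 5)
Your proof is correct and is essentially the same argument as the paper's: both let $\sigma\to 1^{+}$ in the inequality \eqref{20} of Theorem \eqref{thm13}, using the residue of $\zeta$ at its simple pole to control $(\sigma-1)\zeta(\sigma)$ and the difference quotient $\zeta(\sigma+it_{0})/(\sigma-1)=\bigl(\zeta(\sigma+it_{0})-\zeta(1+it_{0})\bigr)/(\sigma-1)\to\zeta'(1+it_{0})$ to control the fourth-power factor. The only cosmetic difference is bookkeeping: you keep \eqref{20} intact and show its left-hand side tends to $0$, whereas the paper first divides through by $(\sigma-1)$ (its inequality \eqref{21}) and contradicts the divergence of $1/(\sigma-1)$ against a finite limit on the left.
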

	\begin{proof}
		It is sufficient to consider the case when $t\neq 0$. A priori from \eqref{20} we deduce, 
		\begin{eqnarray}\label{21}
			{(\sigma-1)}^{3} {\zeta(\sigma)}^{3}|\frac{\zeta(\sigma+it)}{\sigma-1}|^{4}|\zeta(\sigma+2it)|\geq \frac{1}{\sigma-1}
		\end{eqnarray}
		It is easy to verify that, the above identity is valid for $\sigma>1$. Now, as $\sigma\rightarrow 1^{+}$ in \eqref{21}, we observe that, the first factor approaches $1$    [ Since, $\zeta(s)$ has residue $1$ at the pole $s=1$ ]. The third factor tends to $|\zeta(1+2it)|$. If, $|\zeta(1+it)|=0$, then we could have written the middle factor as, 
		\begin{center}
			$	|\frac{\zeta(\sigma+it)-\zeta(1+it)}{\sigma-1}|^{4}\rightarrow |\zeta(1+it)|^{4}$ as $\sigma\rightarrow 1^{+}$.
		\end{center}	
		Thus, for some, $t\neq 0$, if we had, $\zeta(1+it)=0$, the L.H.S. of the equation\eqref{21} would approach the limit, 
		\begin{center}
			$|\zeta'(1+it)|^{4}|\zeta(1+2it)|$ \hspace{20pt} as \hspace{10pt}  $\sigma\rightarrow 1^{+}$.
		\end{center} 
		But, the R.H.S. tends to $\infty$ as $\sigma\rightarrow 1^{+}$ and this yields a contradiction.
	\end{proof}
	
	\subsection{Inequalities for $|\frac{1}{\zeta(s)}|$ and $|\frac{\zeta'(s)}{\zeta(s)}|$}
	
	Applying Theorem \eqref{thm13}, we in fact comment on the bounds of the functions, $|\frac{1}{\zeta(s)}|$ and $|\frac{\zeta'(s)}{\zeta(s)}|$.
	\begin{thm}\label{thm15}
		$\exists$ a constant $M>0$ such that,
		\begin{center}
			$|\frac{1}{\zeta(s)}|<M\log^{7}t$ \hspace{10pt}and,\hspace{10pt}$|\frac{\zeta'(s)}{\zeta(s)}|<M\log^{9}t$  
		\end{center}
		whenever, $\sigma\geq 1$ and,  $t\geq e$.
	\end{thm}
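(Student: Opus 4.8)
The plan is to combine the $3$-$4$-$1$ inequality of Theorem \eqref{thm13} with the derivative estimate of Theorem \eqref{thm11} through a balancing argument. For $\sigma\geq 2$ the Euler product (or the crude bound $|\zeta(s)|\geq 1-\sum_{n\geq 2}n^{-2}>0$) shows that $|1/\zeta(s)|$ and $|\zeta'(s)/\zeta(s)|$ are dominated by absolute constants, so both inequalities hold trivially there. Hence I would restrict attention to the strip $1\leq \sigma<2$, $t\geq e$.

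First I would extract a lower bound for $|\zeta(\sigma+it)|$ in the range $1<\sigma\leq 2$. Rewriting \eqref{20} as $|\zeta(\sigma+it)|\geq \zeta(\sigma)^{-3/4}|\zeta(\sigma+2it)|^{-1/4}$, and using on the one hand the simple pole at $s=1$ to bound $\zeta(\sigma)\leq C/(\sigma-1)$ for $1<\sigma\leq 2$, and on the other hand Theorem \eqref{thm11} at height $2t$ to bound $|\zeta(\sigma+2it)|\leq M\log(2t)=O(\log t)$, I obtain a constant $A>0$ with
\[
|\zeta(\sigma+it)|\geq \frac{A\,(\sigma-1)^{3/4}}{\log^{1/4} t},\qquad 1<\sigma\leq 2.
\]
This is only useful when $\sigma$ is bounded away from $1$; the crux of the proof is to propagate it down to the line $\sigma=1$ itself.

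To do this I would fix $s=\sigma+it$ with $\sigma\geq 1$ and set $\alpha=1+\delta$ with $\delta=\delta(t)$ to be chosen. Writing $\zeta(\sigma+it)=\zeta(\alpha+it)-\int_{\sigma}^{\alpha}\zeta'(u+it)\,du$ and applying the derivative estimate $|\zeta'(u+it)|\leq M\log^{2} t$ of Theorem \eqref{thm11} together with $|\alpha-\sigma|\leq \delta$, the displayed lower bound (applied at $\alpha$, where $\alpha-1=\delta$) yields
\[
|\zeta(\sigma+it)|\geq \frac{A\,\delta^{3/4}}{\log^{1/4} t}-\delta\,M\log^{2} t.
\]
The key step is to balance the two terms: choosing $\delta=c\,\log^{-9} t$ with $c$ small enough forces the subtracted term to be at most half of the first, so that $|\zeta(\sigma+it)|\geq A'\,\delta^{3/4}\log^{-1/4} t=A''\,\log^{-7} t$, the exponent arising from $\tfrac34\cdot 9+\tfrac14=7$. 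This gives $|1/\zeta(s)|<M'\log^{7} t$ for all $\sigma\geq 1$, $t\geq e$, which is the first assertion. The second then follows immediately, since $|\zeta'(s)|\leq M\log^{2} t$ by Theorem \eqref{thm11} gives
\[
\left|\frac{\zeta'(s)}{\zeta(s)}\right|=|\zeta'(s)|\cdot\left|\frac{1}{\zeta(s)}\right|\leq M\log^{2} t\cdot M'\log^{7} t=M''\log^{9} t.
\]
The main obstacle is the balancing step: one must check that the choice $\delta\asymp\log^{-9} t$ keeps $\alpha=1+\delta$ inside the region \eqref{19} in which Theorem \eqref{thm11} is valid — which it does, since $\log^{-9}t$ is far smaller than $A/\log t$ — and that the constants combine so that the exponent comes out to be exactly $7$. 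Every remaining ingredient is a direct application of the stated theorems.
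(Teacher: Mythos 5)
Your proposal is correct and takes essentially the same route as the paper's own proof: the trivial region $\sigma\geq 2$, the lower bound $|\zeta(\sigma+it)|\geq B(\sigma-1)^{3/4}\log^{-1/4}t$ obtained by combining Theorem \eqref{thm13} with the pole estimate $\zeta(\sigma)\leq M/(\sigma-1)$ and Theorem \eqref{thm11}, propagation down to $\sigma=1$ by integrating $\zeta'$ and using the triangle inequality, and the balancing choice $\alpha-1\asymp\log^{-9}t$ giving the exponent $\tfrac34\cdot 9+\tfrac14=7$, followed by multiplying by $|\zeta'(s)|=O(\log^{2}t)$ for the second bound. The only cosmetic differences are that you fix a small constant $c$ in advance (which also quietly handles the range the paper defers to ``$e\leq t\leq t_{0}$'') instead of solving for $\alpha$ explicitly, and you should note, as the paper does, that for $\sigma\geq\alpha$ the lower bound follows directly from $(\sigma-1)^{3/4}\geq(\alpha-1)^{3/4}$ rather than from the integral step.
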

	\begin{proof}
		For $\sigma\geq 2$, we have,
		\begin{center}
			$|\frac{1}{\zeta(s)}|=|\sum\limits_{n=1}^{\infty}\frac{\mu(n)}{n^{s}}|\leq \sum\limits_{n=1}^{\infty}\frac{1}{n^{2}}\leq \zeta(2)$ \hspace{10pt} and, \hspace{10pt} $|\frac{\zeta'(s)}{\zeta(s)}|\leq \sum\limits_{n=1}^{\infty}\frac{\Lambda(n)}{n^{2}}$,
		\end{center}
		Therefore, the inequalities hold trivially provided, $\sigma\geq 2$. Suppose, $1\leq \sigma\leq 2$ and $t\geq e$. Rewriting inequality \eqref{20}, we get,
		\begin{center}
			$\frac{1}{|\zeta(\sigma+it)|}\leq (\zeta(\sigma))^{\frac{3}{4}}|\zeta(\sigma+2it)|^{\frac{1}{4}}$.
		\end{center}	
		Now, observe that, $(\sigma-1)\zeta(\sigma)$ is bounded in the interval $1\leq \sigma\leq 2$, say,  $(\sigma-1)\zeta(\sigma)\leq M$, where, $M$ is an absolute constant. Then,
		\begin{center}
			$\zeta(\sigma)\leq \frac{M}{\sigma-1}$ \hspace{20pt} if, \hspace{10pt} $1< \sigma\leq 2$ .
		\end{center}	
		Furthermore, $\zeta(\sigma+2it)=O(logt)$ if, $1\leq \sigma\leq 2$  [ By Theorem \eqref{thm11} ]. Hence, for $1< \sigma\leq 2$, we have,
		\begin{center}
			$\frac{1}{|\zeta(\sigma+it)|}\leq \frac{M^{\frac{3}{4}}(logt)^{\frac{1}{4}}}{(\sigma-1)^{\frac{3}{4}}}=\frac{A(logt)^{\frac{1}{4}}}{(\sigma-1)^{\frac{3}{4}}}$,
		\end{center}	
		where, $A$ is an absolute constant. Therefore, for some constant $B>0$, 
		\begin{eqnarray}\label{22}
			|\zeta(\sigma+it)|>\frac{B(\sigma-1)^{\frac{3}{4}}}{(logt)^{\frac{1}{4}}} \hspace{40pt} &\mbox{ if,} 1<\sigma\leq 2 &\mbox{ and  } t\geq e.
		\end{eqnarray}	
		This also holds trivially for $\sigma=1$. Let, $\alpha$ be any number satisfying, $1<\alpha<2$.  Then, if $1\leq \sigma\leq \alpha$, $t\geq e$,  we may use Theorem \eqref{thm11} to write,
		\begin{center}
			$|\zeta(\sigma+it)-\zeta(\alpha+it)|\leq \int\limits_{\sigma}^{\alpha}|\zeta'(u+it)|du\leq (\alpha-\sigma)Mlog^{2}t$
		\end{center}	
		\begin{center}
			$\leq (\alpha-1)Mlog^{2}t$.
		\end{center}	
		Hence, by \textit{triangle inequality},
		\begin{center}
			$|\zeta(\sigma+it)|	\geq |\zeta(\alpha+it)|-|\zeta(\sigma+it)-\zeta(\alpha+it)|\geq |\zeta(\alpha+it)|-(\alpha-1)Mlog^{2}t\geq \frac{B(\sigma-1)^{\frac{3}{4}}}{(logt)^{\frac{1}{4}}}- (\alpha-1)Mlog^{2}t$.
		\end{center}
		provided, $1\leq \sigma\leq \alpha$,  and using \eqref{22}, it also holds for $\alpha\leq \sigma\leq 2$, since, $(\sigma-1)^{\frac{3}{4}}\geq (\alpha-1)^{\frac{3}{4}}$. In other words, $1\leq \sigma\leq 2$ and, $t\geq e$, we have the inequality,
		\begin{center}
			$|\zeta(\sigma+it)|\geq \frac{B(\sigma-1)^{\frac{3}{4}}}{(logt)^{\frac{1}{4}}}-(\alpha-1)Mlog^{2}t$,
		\end{center} 
		for any $\alpha$ satisfying $1<\alpha<2$. Now, we make $\alpha$ depend on $t$ and choose $\alpha$ so the first term on R.H.S. is twice the second. This requires,
		\begin{center}
			$\alpha=1+(\frac{B}{2M})^{4}\frac{1}{(logt)^{9}}$.
		\end{center}	
		Clearly, $\alpha>1$ and also $\alpha <2$ if, $t\geq t_{0}$ for some $t_{0}$. Thus, if , $t\geq t_{0}$ and $1\leq \sigma\leq 2$, we then have,
		\begin{center}
			$|\zeta(\sigma+it)|\geq (\alpha-1)Mlog^{2}t=\frac{C}{(logt)^{7}}$. 
		\end{center}	
		This inequality should also be true perhaps for a different value of $C$ if, $e\leq t\leq t_{0}$.\\\\
		Hence, 
		\begin{center}
			$|\zeta(s)|\geq \frac{C}{(logt)^{7}}$, \hspace{40pt}  $\forall$  $\sigma\geq 1$ and, $t\geq e$,
		\end{center}	
		which gives us a corresponding upper bound for $|\frac{1}{\zeta(s)}|$.\\\\
		To get the inequality for $|\frac{\zeta'(s)}{\zeta(s)}|$, we apply Theorem \eqref{thm11} to obtain an extra factor $log^{2}t$.
	\end{proof}
	\section{Analytic Proof of PNT}	
	Before giving our readers the detailed proof of the\textit{"Prime Number Theorem"}, we shall introduce the following result from complex function theory and another theorem which will be required later on in the proof of the main theorem.
	\begin{lemma}\label{lemma5}
		If $f(s)$ has a pole of order $k$ at $s=\alpha$, then the quotient, $\frac{f'(s)}{f(s)}$ has a first order pole at $s=\alpha$ with residue $-k$.
	\end{lemma}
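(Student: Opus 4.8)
The plan is to exploit the local factorization of $f$ near its pole and then compute the logarithmic derivative directly. Since $f$ has a pole of order $k$ at $s=\alpha$, I would first write $f(s)=\frac{g(s)}{(s-\alpha)^{k}}$ in a punctured neighbourhood of $\alpha$, where $g$ is holomorphic at $\alpha$ and, crucially, $g(\alpha)\neq 0$. This nonvanishing is exactly what encodes that the pole has order precisely $k$ rather than higher, and it will be the hinge of the entire argument.

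Next I would differentiate this factorization and form the quotient $\frac{f'(s)}{f(s)}$. Applying the product rule to $f(s)=g(s)(s-\alpha)^{-k}$ gives $f'(s)=(s-\alpha)^{-k-1}\bigl(g'(s)(s-\alpha)-k\,g(s)\bigr)$, and dividing by $f(s)$ yields the key identity
$$\frac{f'(s)}{f(s)}=\frac{g'(s)}{g(s)}-\frac{k}{s-\alpha}.$$
This cleanly separates a (prospectively) holomorphic piece from an explicit simple pole, and it is the heart of the proof.

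The final step is to read off the local behaviour from this identity. Because $g(\alpha)\neq 0$ and $g$ is holomorphic near $\alpha$, the term $\frac{g'(s)}{g(s)}$ is itself holomorphic in a neighbourhood of $\alpha$, so the entire singular part of $\frac{f'(s)}{f(s)}$ at $\alpha$ reduces to the single term $-\frac{k}{s-\alpha}$. This exhibits a first order pole, and multiplying by $(s-\alpha)$ and letting $s\to\alpha$ shows the residue is exactly $-k$, as claimed.

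The argument is essentially routine; the only point requiring genuine care — and the closest thing to an obstacle — is justifying that $g(\alpha)\neq 0$, which is what keeps $\frac{g'}{g}$ holomorphic at $\alpha$. Were one to assume only that $f$ had a pole of order \emph{at most} $k$, the conclusion that the residue equals $-k$ could fail, so the exact order of the pole must be invoked rather than a mere upper bound.
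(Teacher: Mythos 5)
Your proof is correct and takes essentially the same route as the paper: both write $f(s)=\frac{g(s)}{(s-\alpha)^{k}}$ with $g$ analytic and $g(\alpha)\neq 0$, differentiate to obtain $\frac{f'(s)}{f(s)}=\frac{g'(s)}{g(s)}-\frac{k}{s-\alpha}$, and conclude by noting that $\frac{g'(s)}{g(s)}$ is analytic at $\alpha$. Your closing remark about why the \emph{exact} order of the pole (i.e.\ $g(\alpha)\neq 0$) is essential is a sound observation, but it adds emphasis rather than a different argument.
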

	\begin{proof}
		We have,  $f(s)=\frac{g(s)}{(s-\alpha)^{k}}$,  where, $g$ is analytic at $\alpha$ and $g(\alpha)\neq 0$. Hence, for all $s$ in a neighbourhood of $\alpha$, 
		\begin{center}
			$f'(s)=\frac{g'(s)}{(s-\alpha)^{k}}-\frac{kg(s)}{(s-\alpha)^{k+1}}=\frac{g(s)}{(s-\alpha)^{k}}\{\frac{-k}{(s-\alpha)}+\frac{g'(s)}{g(s)}\}$.
		\end{center}	
		Consequently,
		\begin{center}
			$\frac{f'(s)}{f(s)}=\{\frac{-k}{(s-\alpha)}+\frac{g'(s)}{g(s)}\}$.
		\end{center}	
		Which proves the lemma, since $\frac{g'(s)}{g(s)}$ is analytic at $\alpha$.
	\end{proof}
	\begin{thm}\label{thm16}
		The function,
		\begin{center}
			$F(s)=-\frac{\zeta'(s)}{\zeta(s)}-\frac{1}{s-1}$
		\end{center}
		is analytic at $s=1$.
	\end{thm}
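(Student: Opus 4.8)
The plan is to exploit the precise meromorphic structure of $\zeta(s)$ at $s=1$ together with Lemma \eqref{lemma5}. The guiding observation is that the only obstruction to analyticity of $-\zeta'(s)/\zeta(s)$ near $s=1$ is a simple pole inherited from the simple pole of $\zeta(s)$ there, and that its principal part coincides exactly with that of $1/(s-1)$; subtracting the latter therefore removes the singularity entirely.

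First I would recall the established fact that $\zeta(s)$ is holomorphic on $\mathbb{C}$ except for a simple pole at $s=1$ with residue $1$. Since the zeros of a meromorphic function are isolated and $s=1$ is a pole (hence certainly not a zero), I can pass to a punctured disc about $s=1$ on which $\zeta(s)$ is both holomorphic and non-vanishing. On this punctured disc the logarithmic derivative $\zeta'(s)/\zeta(s)$ is analytic, so the only candidate singularity of $-\zeta'(s)/\zeta(s)$ in a full neighbourhood of $s=1$ is the point $s=1$ itself.

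Next I would invoke Lemma \eqref{lemma5} with $\alpha=1$ and $k=1$: because $\zeta(s)$ has a pole of order $1$ at $s=1$, the quotient $\zeta'(s)/\zeta(s)$ has a first-order pole at $s=1$ with residue $-1$. Hence $-\zeta'(s)/\zeta(s)$ has a first-order pole at $s=1$ with residue $+1$, and its Laurent expansion near $s=1$ reads
\[
-\frac{\zeta'(s)}{\zeta(s)} = \frac{1}{s-1} + h(s),
\]
where $h(s)$ is analytic at $s=1$. Subtracting $1/(s-1)$ then yields $F(s) = -\zeta'(s)/\zeta(s) - 1/(s-1) = h(s)$, which is analytic at $s=1$, as required.

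The step demanding the most care is the matching of the principal parts, and it is worth emphasising why the cancellation is automatic rather than a coincidence. By Lemma \eqref{lemma5} the residue of $\zeta'(s)/\zeta(s)$ at a pole of order $k$ equals $-k$, determined by the \emph{order} of the pole alone and independently of the residue of $\zeta(s)$; with the leading minus sign this gives residue $+1$ for a simple pole, exactly the residue of $1/(s-1)$. Thus the real content reduces to the single structural fact that $\zeta(s)$ has a pole of order precisely one at $s=1$, which I would treat as the crux of the argument; everything else is the routine removal of a matched simple pole.
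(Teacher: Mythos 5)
Your proposal is correct and follows essentially the same route as the paper: both apply Lemma \eqref{lemma5} with $k=1$ to the simple pole of $\zeta(s)$ at $s=1$, conclude that $-\zeta'(s)/\zeta(s)$ has a simple pole there with residue $1$ matching that of $\frac{1}{s-1}$, and observe that the difference is therefore analytic. Your additional remarks (the punctured-disc non-vanishing of $\zeta$ and the explicit Laurent decomposition) only make explicit what is already implicit in the lemma's proof.
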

	\begin{proof}
		A priori it follows from the Lemma \eqref{lemma5},  $-\frac{\zeta'(s)}{\zeta(s)}$ has a first order pole at $1$ with residue $1$, as does $\frac{1}{s-1}$. Hence their difference is analytic at $s=1$, and the theorem is proved.
	\end{proof}
	Having proven all the necessary results, we can indeed reformulate the statement we intend to establish in order to prove the \textit{Prime Number Theorem} for our convenience.
	\begin{thm}\label{thm17}
		For, $x\geq 1$, we have,
		\begin{center}
			$\frac{\psi_{1}(x)}{x^{2}}-\frac{1}{2}(1-\frac{1}{x})^{2}=\frac{1}{2\pi}\int\limits_{-\infty}^{\infty}h(1+it)e^{it.logx}dt$,
		\end{center} 	
		where,  the integral,  $\int\limits_{-\infty}^{\infty}|h(1+it)|dt $ converges.\par
		Therefore, the \textbf{Riemann-Lebesgue Lemma} (cf. Theorem \eqref{thm4}) implies,
		\begin{eqnarray}\label{23}
			\psi_{1}(x)\sim \frac{x^{2}}{2}
		\end{eqnarray}
		and hence,
		\begin{center}
			$\psi(x)\sim x$ \hspace{20pt} as,  $x\rightarrow \infty$.
		\end{center}
		which is equivalent to the original statement of "Prime Number Theorem" \eqref{thm3}.
	\end{thm}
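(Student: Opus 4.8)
The plan is to take the contour representation of Remark \eqref{rmk2}, valid a priori only for $c>1$, and push the line of integration leftward onto the critical boundary $\sigma=1$. Once the representation is legitimately established with $c=1$, the factor $x^{c-1}$ collapses to $1$ and the right-hand side becomes exactly the Fourier-type integral $\frac{1}{2\pi}\int_{-\infty}^{\infty}h(1+it)e^{it\log x}\,dt$; the Riemann--Lebesgue Lemma (Theorem \eqref{thm4}) will then force this integral to $0$ as $x\to\infty$, yielding $\psi_{1}(x)\sim x^{2}/2$ and, through Lemma \eqref{lemma2}, the assertion $\psi(x)\sim x$. Thus the proof splits into three tasks: (i) verifying that $h(1+it)$ is integrable over $\mathbb{R}$; (ii) justifying the shift of contour from $\sigma=c$ to $\sigma=1$; and (iii) invoking Riemann--Lebesgue together with the L'H\^opital-type lemma to finish.

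For task (i), I would first record that $h(s)=\frac{1}{s(s+1)}\left(-\frac{\zeta'(s)}{\zeta(s)}-\frac{1}{s-1}\right)$ is analytic on the closed half-plane $\sigma\geq 1$: its only possible singularities come from the pole of $-\zeta'/\zeta$ at $s=1$, which is cancelled by $-\frac{1}{s-1}$ (Theorem \eqref{thm16}), and from zeros of $\zeta$ on the line $\sigma=1$, which are excluded by Theorem \eqref{thm14}; the poles of $\frac{1}{s(s+1)}$ at $s=0,-1$ lie strictly to the left. Near $t=0$ the function $h(1+it)$ is therefore bounded, so integrability is only an issue as $|t|\to\infty$. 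There I would estimate $|h(1+it)|$ using the bounds of Theorem \eqref{thm15}, namely $\bigl|\zeta'(s)/\zeta(s)\bigr|<M\log^{9}t$: since $\bigl|\tfrac{1}{s(s+1)}\bigr|=O(t^{-2})$ and $\bigl|\tfrac{1}{s-1}\bigr|=O(t^{-1})$ on $\sigma=1$, one gets $|h(1+it)|=O\!\left(\frac{\log^{9}t}{t^{2}}\right)$, which is integrable at infinity. Hence $\int_{-\infty}^{\infty}|h(1+it)|\,dt$ converges.

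Task (ii) is where I expect the real difficulty to lie. The aim is to show that the value of $\frac{1}{2\pi i}\int_{c-\infty i}^{c+\infty i}x^{s-1}h(s)\,ds$ is independent of $c$ for $c\geq 1$. I would apply the Cauchy Integral Theorem (Theorem \eqref{thm5}) to the rectangle with vertical sides $\sigma=1$ and $\sigma=c$ and horizontal sides at heights $\pm T$, inside which $h$ is analytic by part (i). The crux is to show that the two horizontal segments contribute nothing in the limit $T\to\infty$: on a segment $\{\sigma\pm iT:1\leq\sigma\leq c\}$ the integrand is bounded by $x^{c-1}\,|h(\sigma\pm iT)|$, and the estimate $|h(\sigma\pm iT)|=O(\log^{9}T/T^{2})$ from Theorem \eqref{thm15}, valid \emph{uniformly} for $\sigma\geq 1$, makes each horizontal integral $O\!\left(x^{c-1}(c-1)\log^{9}T/T^{2}\right)\to 0$. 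Consequently the integrals along $\sigma=c$ and $\sigma=1$ coincide, and we may take $c=1$. Parametrising $s=1+it$ so that $x^{s-1}=e^{it\log x}$ and $ds=i\,dt$ turns the $\frac{1}{2\pi i}$ into $\frac{1}{2\pi}$ and produces precisely the claimed identity.

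Finally, for task (iii), the integral in the displayed identity is the Fourier transform of the $L^{1}$ function $t\mapsto h(1+it)$ evaluated at a frequency proportional to $\log x$; as $x\to\infty$ this frequency tends to infinity, so by the Riemann--Lebesgue Lemma (Theorem \eqref{thm4}) the right-hand side vanishes. Therefore $\frac{\psi_{1}(x)}{x^{2}}-\frac{1}{2}\left(1-\frac{1}{x}\right)^{2}\to 0$, and since $\frac{1}{2}\left(1-\frac{1}{x}\right)^{2}\to\frac{1}{2}$ we obtain $\psi_{1}(x)\sim\frac{x^{2}}{2}$, i.e. \eqref{23}. Applying Lemma \eqref{lemma2} (equivalently Theorem \eqref{thm7}) with $c=2$ and $L=\tfrac{1}{2}$ then gives $\psi(x)\sim 2\cdot\tfrac{1}{2}\,x=x$, which is Theorem \eqref{thm3} and hence equivalent to the Prime Number Theorem.
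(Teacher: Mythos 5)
Your proposal is correct and follows essentially the same route as the paper's own proof: shifting the contour from $\sigma=c$ to $\sigma=1$ via Cauchy's Theorem on the rectangle with horizontal sides at $\pm T$, killing the horizontal segments with the bound $|h(s)|=O\!\left(\log^{9}T/T^{2}\right)$ from Theorem \eqref{thm15}, verifying $\int_{-\infty}^{\infty}|h(1+it)|\,dt<\infty$ with the same estimate, and finishing with the Riemann--Lebesgue Lemma and Lemma \eqref{lemma2} (Theorem \eqref{thm7}). Your write-up is, if anything, slightly more explicit than the paper's in justifying the analyticity of $h$ on $\sigma\geq 1$ (via Theorems \eqref{thm14} and \eqref{thm16}) and in naming the constants $c=2$, $L=\tfrac{1}{2}$ in the final differentiation step.
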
	
	\begin{proof}
		In Theorem \eqref{thm9}, its already proved that,
		\begin{center}
			$\frac{\psi_{1}(x)}{x^{2}}-\frac{1}{2}(1-\frac{1}{x})^{2}=\frac{1}{2\pi i}\int\limits_{c-\infty i}^{c+\infty i}x^{s-1}h(s)ds$,
		\end{center}	
		for $c>1$ and $x\geq 1$, where,
		\begin{center}
			$h(s)=\frac{1}{s(s+1)}\{-\frac{\zeta'(s)}{\zeta(s)}-\frac{1}{s-1}\}$.
		\end{center}
		Our first objective is to show that, we can shift the path of integration to the line, $\sigma =1$. To do this, we apply \textbf{Cauchy's Theorem} (cf. Theorem \eqref{thm5}) to the rectangle $R$ shown in the \textit{Figure}  \eqref{fig1}.\par
		\begin{figure}[hbt!]
			\centering
			\begin{tikzpicture}
				\draw[thick,<-] (-2,0) -- (-0.2,0) node[anchor=north] {O};
				\draw[thick,->] (-0.2,0) -- (8,0) node[anchor=north] {Re(s)};
				\draw[thick,<->] (0,-5) -- (0,5) node[anchor=east] {Im(s)};	
				\draw[line width=2pt,red, dashed] (2,-4) -- (5,-4) -- (5,4) -- (2,4) -- (2,-4);
				\draw[line width=2pt,black] (2,-4) -- (2,-4) node[anchor=north] {$1-iT$};
				\draw[line width=2pt,black] (5,-4) -- (5,-4) node[anchor=north] {$c-iT$};
				\draw[line width=2pt,black] (5.2,0) -- (5.2,0) node[anchor=north] {$c$};
				\draw[line width=2pt,black] (5,4.5) -- (5,4.5) node[anchor=north] {$c+iT$};
				\draw[line width=2pt,black] (2,4.5) -- (2,4.5) node[anchor=north] {$1+iT$};
				\draw[line width=2pt,black] (1.8,0) -- (1.8,0) node[anchor=north] {$1$};
				\draw[thick,->] (2.5,-3.5) -- (4.5,-3.5) node[anchor=north] { };
				\draw[thick,->] (4.5,0.5) -- (4.5,3) node[anchor=east] { };
				\draw[thick,->] (4.5,3.5) -- (2.5,3.5) node[anchor=north] { };
				\draw[thick,->] (2.5,-0.5) -- (2.5,-3) node[anchor=east] { };
			\end{tikzpicture}
			\caption{}
			\label{fig1}
		\end{figure}
		
		The integral of $x^{s-1}h(s)$ around $R$ is $0$, since the integrand is analytic inside and on $R$. Next, we establish that, the integrals along the horizontal segments tends to $0$ as $T\rightarrow \infty$.
		Since the integrand has the same absolute values at the conjugate points, it suffices to consider only the upper segment, $t=T$. On this segment, we have, the estimates,
		\begin{center}
			$|\frac{1}{s(s+1)}|\leq \frac{1}{T^{2}}$ and,  $|\frac{1}{s(s+1)(s-1)}|\leq \frac{1}{T^{3}}\leq \frac{1}{T^{2}}$.
		\end{center}	
		Therefore, $\exists$ a constant $M$ such that, $|\frac{\zeta'(s)}{\zeta(s)}|\leq M(\log T)^{9}$ if,    $\sigma\geq 1$ and $t\geq e$. Hence, if $T\geq e$, then, we shall have,
		\begin{center}
			$|h(s)|\leq \frac{M(\log T)^{9}}{T^{2}}$
		\end{center}	
		
		so that,
		\begin{center}
			$|\int\limits_{1+iT}^{c+iT}x^{s-1}h(s)ds|\leq \int\limits_{1}^{c}x^{c-1}\frac{M(\log T)^{9}}{T^{2}}d\sigma=Mx^{c-1}\frac{(\log T)^{9}}{T^{2}}(c-1)$.
		\end{center}
		Similar approach follows for the lower segment, $t=-T$ in order to obtain the corresponding estimate ,
		\begin{center}
			$|\int\limits_{1-iT}^{c-iT}x^{s-1}h(s)ds|\leq \int\limits_{1}^{c}x^{c-1}\frac{M(\log T)^{9}}{T^{2}}d\sigma=Mx^{c-1}\frac{(\log T)^{9}}{T^{2}}(c-1)$.
		\end{center}

		Therefore, the integrals along the horizontal segments tend to $0$ as $T\rightarrow \infty$ , and,
		\begin{center}
			$\int\limits_{c-\infty i}^{c+\infty i}x^{s-1}h(s)ds=\int\limits_{1-\infty i}^{1+\infty i}x^{s-1}h(s)ds$.
		\end{center}	
		On the line $\sigma=1$, we put $s=1+it$ to get,
		\begin{center}
			$\frac{1}{2\pi i}\int\limits_{1-\infty i}^{1+\infty i}x^{s-1}h(s)ds=\frac{1}{2\pi}\int\limits_{-\infty}^{\infty}h(1+it)e^{it\log x}dt$.
		\end{center}	
		Note that,
		\begin{center}
			$\int\limits_{-\infty}^{\infty}|h(1+it)| dt=\int\limits_{-e}^{e}|h(1+it)| dt+\int\limits_{e}^{\infty}|h(1+it)| dt+\int\limits_{-\infty}^{-e}|h(1+it)| dt$.
		\end{center}	
		Now, in the integral, $\int\limits_{e}^{\infty}|h(1+it)| dt$, we observe that,
		\begin{center}
			$|h(1+it)|\leq \frac{M(\log t)^{9}}{t^{2}}$
		\end{center}	
		
		Hence, $\int\limits_{e}^{\infty}|h(1+it)|dt$ converges.	Similarly,  $\int\limits_{-\infty}^{-e}|h(1+it)|dt$ converges, therefore,  $\int\limits_{-\infty}^{\infty}|h(1+it)|dt$ converges. Thus, we may apply the \textbf{Riemann-Lebesgue Lemma} (cf. Theorem \eqref{thm4}) to obtain, 
		\begin{center}
			$\psi_{1}(x)\sim \frac{x^{2}}{2}$.
		\end{center}
		By Theorem \eqref{thm7}, the above result implies that, 
		\begin{center}
			$\psi(x)\sim x$ as,  $x\rightarrow \infty$.
		\end{center}	
		This proves the \textit{\textbf{Prime Number Theorem}}.
		
	\end{proof}

	\vspace{80pt}
	\section*{Acknowledgments}
I'll always be grateful to \textbf{Prof. Baskar Balasubramanyam} ( Associate Professor, Department of Mathematics, IISER Pune, India ), whose unconditional support and guidance helped me in understanding the topic and developing interest towards Analytic Number Theory.
	\section*{Statements and Declarations}
	\subsection*{Conflicts of Interest Statement}
	I as the author of this article declare no conflicts of interest.
	\subsection*{Data Availability Statement}
	I as the sole author of this article confirm that the data supporting the findings of this study are available within the article [and/or] its supplementary materials.

	\bibliographystyle{elsarticle-num}

	\end{document}